\theoremstyle{plain}
\newtheorem{thm}{Theorem}[section]
\newtheorem{lem}[thm]{Lemma}
\newtheorem{prop}[thm]{Proposition}
\theoremstyle{remark}
\author{
Li-Chang Hung\footnote{Corresponding author's email address: \texttt{lichang.hung@gmail.com
} 
         }
\vspace{10mm}
\\
 \small
 \textit{
 Department of Mathematics, National Taiwan University, Republic of Taiwan}
\\
 \small 
}
\title{An N-barrier maximum principle for elliptic systems arising from the study of traveling waves in reaction-diffusion systems
}
\date{
\small 
L.-C. Hung dedicates the N-Barrier Maximum principle to NBM 
}
\begin{document}
\maketitle

\begin{abstract}

By employing the N-barrier method developed in the paper, we establish a new N-barrier maximum principle for diffusive Lotka-Volterra systems of two competing species. As an application of this maximum principle, we show under certain conditions, the existence and nonexistence of traveling waves solutions for systems of three competing species. In addition, new $(1,0,0)$-$(u^{\ast},v^{\ast},0)$ waves are given in terms of the tanh function provided that the parameters satisfy certain conditions.










\end{abstract}



\section{Introduction}
\vspace{5mm}

Species diversity refers to the number of different species and abundance of each species that live within an ecological system. To be more specific, species diversity takes into consideration species richness and species evenness; the former is defined as the total number of different species and the latter the variation of abundance in individuals per species. The importance of species diversity to an ecological system lies in the fact that the ecological system with a greater species diversity  may function more efficiently and productively since more resources available for other species within the ecological system will be made. Therefore, the study of species diversity has been extensively investigated via both field research and theoretical approaches. 

As a suggestive example of species diversity, we investigate the situation where one exotic competing species (say, $W$) invades the ecological system of two native species (say, U and V) that are competing in the absence of $W$. Then a problem related to \textit{competitive exclusion} (\cite{Armstrong80Competitive-exclusion}, \cite{Hsu08Competitive-exclusion}, \cite{Hsu-Smith-Waltman96Competitive-exclusion-coexistence-Competitive}, \cite{Jang13Competitive-exclusion-Leslie-Gower-competition-Allee}, \cite{McGehee77Competitive-exclusion}, \cite{Smith94Competition}) or \textit{competitor-mediated coexistence} (\cite{CantrellWard97Competition-mediatedCoexistence}, \cite{Kastendiek82Competitor-mediatedCoexistence3Species}, \cite{Mimura15DynamicCoexistence3species}) arises, and a mathematical model for this situation can be provided by the diffusive Lotka-Volterra system of three competing species (\cite{Adamson12SpeciesCyclicCompetition}, \cite{EiMimuraIkota99SegregatingCompetition-diffusion}, \cite{Grossberg78Decision-Patterns-Oscillations-LVcompetitive}, \cite{Gyllenberg09LV3species-Heteroclinic}, \cite{Hallam79Persistence-Extinction3speciesLV}, \cite{KoRyuAhn14Coexistence3Competing-species}, \cite{Maier13Integration3-dimensionalLV}, \cite{Mimura15DynamicCoexistence3species}, \cite{PetrovskiiShigesada01Spatio-temporalThree-competitive-species}, \cite{Zeeman98Three-dimensionalCompetitiveLV}, \cite{Zeeman93Hopf-bifurcationsCompetitive3speciesLV})
\begin{equation}\label{eqn: compet L-V sys of 3 species with diffu}
\begin{cases}
\vspace{3mm}
u_t=d_1\,u_{yy}+u\,(\sigma_1-c_{11}\,u-c_{12}\,v-c_{13}\,w), \quad y\in\mathbb{R},\quad t>0,\\
\vspace{3mm}
v_t=d_2\,v_{yy}+v\,(\sigma_2-c_{21}\,u-c_{22}\,v-c_{23}\,w), \quad y\in\mathbb{R},\quad t>0,\\
w_t=d_3\,w_{yy}+w\,(\sigma_3-c_{31}\,u-c_{32}\,v-c_{33}\,w), \quad y\in\mathbb{R},\quad t>0,
\end{cases}
\end{equation} 
where $u(y,t)$, $v(y,t)$ and $w(y,t)$ denote the population densities of $U$, $V$ and $W$ at time $t$ and position $y$. The parameters $d_i$, $\sigma_i$, $c_{ii}$ $(i=1,2,3)$, and $c_{ij}$ $(i,j=1,2,3  \ \text{with} \;i\neq j)$, which are all positive constants, stand for the diffusion rates, intrinsic growth rates, intra-specific competition rates, and inter-specific competition rates, respectively. In particular, the result in \cite{CHMU-semi} indicates from the viewpoint of competitor-mediated coexistence, the coexistence of strongly competing species in the presence of an exotic competing species.

We begin with a two-species system of \eqref{eqn: compet L-V sys of 3 species with diffu} in the absence of $W$, i.e.
\begin{equation}\label{eqn: compet L-V sys of 2 species with diffu}
\begin{cases}
u_t=d_1\,u_{yy}+u\,(\sigma_1-c_{11}\,u-c_{12}\,v), \quad y\in\mathbb{R},\quad t>0,\\ \\
\hspace{0.7mm}v_t=d_2\,v_{yy}+v\,(\sigma_2-c_{21}\,u-c_{22}\,v), \quad y\in\mathbb{R},\quad t>0.\\
\end{cases}
\end{equation}
Imposing the zero Neumann boundary conditions
\begin{equation}
\frac{\partial u}{\partial \nu}=\frac{\partial v}{\partial \nu}=0,\quad x\in\partial\Omega
\end{equation}
and suitable initial conditions
\begin{equation}
u(x,0)=\hat{u}(x),\;v(x,0)=\hat{v}(x),\quad x\in\Omega
\end{equation}
on \eqref{eqn: compet L-V sys of 2 species with diffu} with the entire space $\mathbb{R}$ is replaced by a bounded and convex domain $\Omega$, we conclude from \cite{Hirsch83Strongly-monotone-semiflows} and \cite{KishimotoWeinberger85Stable-equilibriaConvex} that any positive solution $(u(x,t),v(x,t))$ of such a initial-boundary value problem converges to either $(\frac{\sigma_1}{c_{11}},0)$ or $(0,\frac{\sigma_2}{c_{22}})$ when $U$ and $V$ are \textit{strongly competing}, i.e. when the following condition hold:
\begin{equation}
\frac{\sigma_1}{c_{11}}>\frac{\sigma_2}{c_{21}},\,\frac{\sigma_2}{c_{22}}>\frac{\sigma_1}{c_{12}}.
\end{equation}
In this case, \textit{Gause's principle of competitive exclusion} occurs between the two species $U$ and $V$ when $U$ and $V$ competing for the same limited resources cannot stably coexist; one will prevail and the other is excluded. When the influence of diffusion in \eqref{eqn: compet L-V sys of 2 species with diffu} is disregarded, \eqref{eqn: compet L-V sys of 2 species with diffu} becomes
\begin{equation}\label{eqn: compet L-V sys of 2 species with diffu ODE}
\begin{cases}
u_t=u\,(\sigma_1-c_{11}\,u-c_{12}\,v),\quad t>0,\\ \\
\hspace{0.7mm}v_t=v\,(\sigma_2-c_{21}\,u-c_{22}\,v),\quad t>0.\\
\end{cases}
\end{equation}

It is readily seen that \eqref{eqn: compet L-V sys of 2 species with diffu ODE} has four equilibria: $\textbf{e}_1=(0,0)$, $\textbf{e}_2=\big(\frac{\sigma_1}{c_{11}},0\big)$, $\textbf{e}_3=\big(0,\frac{\sigma_2}{c_{22}}\big)$ and $\textbf{e}_4=(u^{\ast},v^{\ast})$, where $(u^{\ast},v^{\ast})=\big(\frac{c_{22}\,\sigma_1-c_{12}\,\sigma_2}{c_{11}\,c_{22}-c_{12}\,c_{21}},\frac{c_{11}\,\sigma_2-c_{21}\,\sigma_1}{c_{11}\,c_{22}-c_{12}\,c_{21}}\big)$ is the intersection of the two straight lines $\sigma_1-c_{11}\,u-c_{12}\,v=0$ and $\sigma_2-c_{21}\,u-c_{22}\,v=0$, whenever it exists. 
In the diffusion-free case, we can classify the asymptotic behaviour of solutions $(u(t),v(t))$ for \eqref{eqn: compet L-V sys of 2 species with diffu ODE} as $t\rightarrow\infty$ depending on $\sigma_i$ and $c_{ij}$ $(i,j=1,2)$, as described in:

\vspace{5mm}

\begin{prop}[\cite{demottoni79}]\label{prop: ODE stability of two-species systems}
Suppose that $(u(t),v(t))$ is a solution of \eqref{eqn: compet L-V sys of 2 species with diffu ODE} with initial conditions $u(0),v(0)>0$. We have 
\begin{itemize}
\item [(i)] \textit{(Competitive exclusion)} if   $\frac{\sigma_1}{c_{11}}>\frac{\sigma_2}{c_{21}}$ and $\frac{\sigma_2}{c_{22}}<\frac{\sigma_1}{c_{12}}$, then
                       $\lim\limits^{}_{t\rightarrow\infty}(u(t),v(t))=\big(\frac{\sigma_1}{c_{11}},0\big)$;
\item [(ii)] \textit{(Competitive exclusion)} if  $\frac{\sigma_1}{c_{11}}<\frac{\sigma_2}{c_{21}}$ and $\frac{\sigma_2}{c_{22}}>\frac{\sigma_1}{c_{12}}$, then
                       $\lim\limits^{}_{t\rightarrow\infty}(u(t),v(t))=\big(0,\frac{\sigma_2}{c_{22}}\big)$;
\item [(iii)] \textit{(Strong competition)}  if $\frac{\sigma_1}{c_{11}}>\frac{\sigma_2}{c_{21}}$ and $\frac{\sigma_2}{c_{22}}>\frac{\sigma_1}{c_{12}}$, then
$\lim\limits^{}_{t\rightarrow\infty}(u(t),v(t))=\big(\frac{\sigma_1}{c_{11}},0\big)$ or $\big(0,\frac{\sigma_2}{c_{22}}\big)$ depending on the initial condition;
\item [(iv)] \textit{(Weak competition)} if $\frac{\sigma_1}{c_{11}}<\frac{\sigma_2}{c_{21}}$ and $\frac{\sigma_2}{c_{22}}<\frac{\sigma_1}{c_{12}}$, then
                       $\lim\limits^{}_{t\rightarrow\infty}(u(t),v(t))=(u^{\ast},v^{\ast})$.
\end{itemize}
\end{prop}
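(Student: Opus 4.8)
The plan is to treat \eqref{eqn: compet L-V sys of 2 species with diffu ODE} as a planar autonomous system and carry out a global phase-plane analysis, organised around the four equilibria $\mathbf{e}_1,\dots,\mathbf{e}_4$ and the relative position of the two nullcline lines $L_1:\sigma_1-c_{11}u-c_{12}v=0$ and $L_2:\sigma_2-c_{21}u-c_{22}v=0$. First I would record the basic dynamical features. The coordinate axes $\{u=0\}$ and $\{v=0\}$ are invariant, so the open first quadrant $Q=\{u,v>0\}$ is positively invariant for positive initial data. Comparing the first equation with the scalar logistic equation $\dot u\le u(\sigma_1-c_{11}u)$, and likewise for $v$, yields $\limsup_{t\to\infty}u(t)\le\sigma_1/c_{11}$ and $\limsup_{t\to\infty}v(t)\le\sigma_2/c_{22}$, so every orbit is bounded and forward-complete; hence each $\omega$-limit set is a nonempty, compact, connected subset of the closed first quadrant.

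Next I would rule out periodic orbits via Dulac's criterion. Writing the right-hand sides as $f=u(\sigma_1-c_{11}u-c_{12}v)$ and $g=v(\sigma_2-c_{21}u-c_{22}v)$ and choosing the Dulac multiplier $B(u,v)=1/(uv)$, a short computation gives
\[
\partial_u(Bf)+\partial_v(Bg)=-\frac{c_{11}}{v}-\frac{c_{22}}{u}<0 \quad\text{on }Q.
\]
Since this divergence has one sign on the simply connected region $Q$, there is no periodic orbit, nor any heteroclinic cycle bounding a subregion of $Q$. Combined with boundedness, the Poincaré--Bendixson theorem then forces every $\omega$-limit set to consist of equilibria. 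I would reinforce this with the observation that the system is \emph{competitive} (the off-diagonal Jacobian entries $-c_{12}u$ and $-c_{21}v$ are nonpositive), so by the theory of planar monotone systems every bounded orbit in fact converges to a single equilibrium.

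It then remains to decide, in each regime, which equilibrium is selected, and for this I would linearise, using that at an interior equilibrium the nullcline relations simplify the Jacobian diagonal to $-c_{11}u^\ast,\,-c_{22}v^\ast$. One finds that $\mathbf e_1$ is always a source; the eigenvalues at $\mathbf e_2$ are $-\sigma_1$ and $c_{21}(\sigma_2/c_{21}-\sigma_1/c_{11})$, and at $\mathbf e_3$ are $-\sigma_2$ and $c_{12}(\sigma_1/c_{12}-\sigma_2/c_{22})$; while at $\mathbf e_4$ the trace is $-(c_{11}u^\ast+c_{22}v^\ast)<0$ and the determinant has the sign of $c_{11}c_{22}-c_{12}c_{21}$. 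Translating the hypotheses of (i)--(iv) into sign conditions on these quantities shows: in (i) only $\mathbf e_2$ is a stable node (with $\mathbf e_3$ a saddle and $\mathbf e_4\notin Q$, since $u^\ast,v^\ast$ then have opposite signs); in (ii) symmetrically only $\mathbf e_3$ is stable; in (iv) the hypotheses force $c_{11}c_{22}>c_{12}c_{21}$ and $u^\ast,v^\ast>0$, so $\mathbf e_4$ is the unique stable interior equilibrium while $\mathbf e_2,\mathbf e_3$ are saddles; and in (iii) they force $c_{11}c_{22}<c_{12}c_{21}$, making $\mathbf e_4\in Q$ a saddle while both $\mathbf e_2$ and $\mathbf e_3$ are stable nodes. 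In cases (i), (ii), (iv) the unique stable equilibrium is then the common limit of all orbits, giving the stated convergence.

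The main obstacle is the strong-competition case (iii), where two stable equilibria coexist. Here I would invoke the monotone-systems structure more carefully: the one-dimensional stable manifold of the interior saddle $\mathbf e_4$ is a monotone separatrix dividing $Q$ into exactly two regions, each the basin of attraction of one of $\mathbf e_2,\mathbf e_3$; this is precisely the ``depending on the initial condition'' alternative. Establishing that this stable manifold is a single curve partitioning $Q$, and that no orbit remains off the two basins except those asymptotic to $\mathbf e_4$, is the delicate point, and it is exactly where the ordering properties of the competitive flow---rather than Poincaré--Bendixson alone---are essential.
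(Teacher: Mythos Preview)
Your argument is correct and follows the standard phase-plane route for this classical result: positive invariance and boundedness, Dulac's criterion with multiplier $1/(uv)$ to exclude periodic orbits, Poincar\'e--Bendixson together with the competitive (monotone) structure to force convergence to an equilibrium, and then linearisation to identify which equilibrium attracts in each parameter regime. The computation of $\partial_u(Bf)+\partial_v(Bg)=-c_{11}/v-c_{22}/u<0$ is right, as are the eigenvalue formulas at $\mathbf e_2$, $\mathbf e_3$, and the trace/determinant signs at $\mathbf e_4$.

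There is nothing to compare against, however: the paper does not prove Proposition~\ref{prop: ODE stability of two-species systems} at all. It is stated with a citation to \cite{demottoni79} and used only as background to motivate the hypotheses $\mathbf{[S]}$ and $\mathbf{[W]}$; no proof or sketch appears anywhere in the text. So your proposal is not an alternative to the paper's argument but a self-contained justification of a result the paper simply quotes.

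One small remark on your case (iii): you are right that the separatrix structure is the subtle point, but for the statement as written you need slightly less than a full description of the basins. It suffices to know that every orbit converges to \emph{some} equilibrium (which you already have from Dulac plus Poincar\'e--Bendixson, or from the planar competitive theory), that $\mathbf e_1$ is a source so its basin in $\overline Q$ is $\{(0,0)\}$, and that the stable manifold of the saddle $\mathbf e_4$ is one-dimensional and hence has empty interior. Then generic initial data in $Q$ must land on $\mathbf e_2$ or $\mathbf e_3$, and the ``depending on the initial condition'' clause is justified without needing the full monotone-separatrix picture.
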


\vspace{5mm}

As in case $(iii)$ mentioned previously, Gause's principle of competitive exclusion also occurs in cases $(i)$ and $(ii)$: one species always wins and the other species become extinct in the long run.   It is easy to see that we do not need to treat one of cases $(i)$ and $(ii)$ since one of the two cases is obtained from the other by exchanging the roles of $u$ and $v$. For the case of weak competition, the Lotka-Volterra model \eqref{eqn: compet L-V sys of 2 species with diffu ODE}  predicts that the stable coexistence state $(u^{\ast},v^{\ast})$ exists only when intra-specific competition has a greater effect than inter-specific competition.


We shall assume throughout, unless otherwise stated, that either strong or weak competition occurs between the two species $U$ and $V$:
\begin{itemize}
  \item \textbf{(Strong competition)} $\mathbf{[S]}$: $\frac{\sigma_1}{c_{11}}>\frac{\sigma_2}{c_{21}}$ and $\frac{\sigma_2}{c_{22}}>\frac{\sigma_1}{c_{12}}$; 
  \item  \textbf{(Weak competition)} $\mathbf{[W]}$: $\frac{\sigma_1}{c_{11}}<\frac{\sigma_2}{c_{21}}$ and $\frac{\sigma_2}{c_{22}}<\frac{\sigma_1}{c_{12}}$ 
\end{itemize}
in investigating \textit{traveling wave solutions} (\cite{Volperts94TWS-Parabolic})
\begin{equation}\label{eqn: traveling wave (u,v)=(U,V) 2 species}
(u(y,t),v(y,t))=(u(x),v(x)), \quad x=y-\theta \,t
\end{equation}
of \eqref{eqn: compet L-V sys of 2 species with diffu}. Here $\theta$ is the propagation speed of the traveling wave. We note that $u^{\ast},v^{\ast}>0$ if and only if either $\mathbf{[S]}$ or $\mathbf{[W]}$ holds. Substituting \eqref{eqn: traveling wave (u,v)=(U,V) 2 species} into \eqref{eqn: compet L-V sys of 2 species with diffu} yields the following system of ordinary differential equations
\begin{equation}\label{eqn: compet L-V sys of 2 species with diffu TWS}
\begin{cases}
d_1\,u_{xx}+\theta\,u_{x}+u\,(\sigma_1-c_{11}\,u-c_{12}\,v)=0, \quad x\in\mathbb{R}, \\\\
d_2\,v_{xx}\hspace{0.8mm}+\theta\,v_{x}+v\,(\sigma_2-c_{21}\,u-c_{22}\,v)=0,\quad x\in\mathbb{R}. \\
\end{cases}
\end{equation}
The problem as to which species will survive in a competitive system is of importance in ecology. In order to tackle this problem, we use traveling wave solutions of the form \eqref{eqn: traveling wave (u,v)=(U,V) 2 species}.
 
In this paper, we treat the following boundary value problem
\begin{equation}\label{eqn: L-V BVP  before scaled}
\begin{cases}
\vspace{3mm}
d_1\,u_{xx}+\theta\,u_{x}+u\,(\sigma_1-c_{11}\,u-c_{12}\,v)=0, \quad x\in\mathbb{R}, \\
\vspace{3mm}
d_2\,v_{xx}\hspace{0.8mm}+\theta\,v_{x}+v\,(\sigma_2-c_{21}\,u-c_{22}\,v)=0,\quad x\in\mathbb{R}, \\
(u,v)(-\infty)=\text{\bf e}_2,\quad (u,v)(+\infty)= \text{\bf e}_4.
\end{cases}
\end{equation}
and call a solution $(u(x),v(x))$ of \eqref{eqn: L-V BVP  before scaled} an $(\textbf{e}_2,\textbf{e}_4)$-wave.  


Under various parameters, monotone $(\textbf{e}_2,\textbf{e}_4)$-waves are investigated via different approaches (for instance, \cite{Hou&Leung08}, \cite{Kanel06}, \cite{Kanel&Zhou96estimatespeed}). It is not clear from the assumptions of parameters given in the references above that the monotone $(\textbf{e}_2,\textbf{e}_4)$-waves have the property $u^{\ast}> v^{\ast}$ or not. Let us see what happens when the answer is affirmative. If $u^{\ast}> v^{\ast}$ holds, then we easily see that, since $v$ is monotonically increasing and $u$ is monotonically decreasing the profile of $u$ lies completely above the profile of $v$. Accordingly, in this case $u$ dominates the entire habitat $\mathbb{R}$. Indeed, it has been proved that there exist two types of $(\textbf{e}_2,\textbf{e}_4)$-waves, one with $u^{\ast}> v^{\ast}$ and the other with $u^{\ast}\leq v^{\ast}$, by giving exact $(\textbf{e}_2,\textbf{e}_4)$-waves in \cite{hung2012JJIAM}. We note that in the case $u^{\ast}> v^{\ast}$, the phenomenon exhibited by the dominance of $u$ in the entire habitat $\mathbb{R}$ is unique and is yet to be explored. In particular, exact $(\textbf{e}_1,\textbf{e}_2)$-waves, $(\textbf{e}_1,\textbf{e}_4)$-waves, $(\textbf{e}_2,\textbf{e}_3)$-waves and $(\textbf{e}_2,\textbf{e}_4)$-waves are given in \cite{hung2012JJIAM} under certain conditions on the parameters by applying judicious ans\"{a}tzes for exact solutions. 

When the inhabitant of the two competing species $U$ and $V$ is resource-limited, the investigation of the total mass or the total density of the two species $U$ and $V$ is essential. This gives rises to the problem as to the estimate of the total density $u(x)+v(x)$ in \eqref{eqn: L-V BVP  before scaled}. In addition, another issue which motivates us to study the estimate of $u(x)+v(x)$ is the measurement of the \textit{species evenness index} $\mathcal{J}$ for \eqref{eqn: L-V BVP  before scaled}. As $\mathcal{J}$ is defined via the \textit{Shannon's diversity index} $\mathcal{H}$ (\cite{Baczkowski98Generalized-diversity-index}, \cite{Good53Estimation-population-parameters}, \cite{Ramezan11Shannon-diversity-index},  \cite{Simpson49Measurement-diversity}), i.e.
\begin{equation}
\mathcal{J}=\frac{\mathcal{H}}{\ln (s)},
\end{equation}
where
\begin{equation}
\mathcal{H}=-\sum_{i=1}^{s} p_i\cdot\ln (p_i),
\end{equation}
$s$ is the total number of species, and $p_i$ is the proportion of the $i$-th species determined by dividing the number of the $i$-th species species by the total number of all species, the species evenness index $\mathcal{J}$ for \eqref{eqn: L-V BVP  before scaled} is given by
\begin{equation}\label{eqn: species evenness index J for two species}
\mathcal{J}=-\frac{1}{(\ln 2) (u+v)}\,\left(u\,\ln \Big(\frac{u}{u+v}\Big)+v\,\ln \Big(\frac{v}{u+v}\Big)\right).
\end{equation}
One of our primary goals in this paper is to address the problem of giving a priori estimates of $u(x)+v(x)$, which is involved in the calculation of $\mathcal{J}$ in \eqref{eqn: species evenness index J for two species}. On the other hand, we also are concerned with the following question when a priori estimates of $u(x)+v(x)$ are given:

\vspace{5mm}

\textbf{Q1}: \textit{How does the estimate of $u(x)+v(x)$ depend on the parameters in \eqref{eqn: L-V BVP  before scaled}?}

\vspace{5mm}

In \cite{Chen&Hung15Nonexistence}, upper and lower bounds of $u(x)+v(x)$ are given when the two diffusion rates $d_1$ and $d_2$ are equal. However, the approach employed in \cite{Chen&Hung15Nonexistence} to obtain upper or lower bounds for $u+v$ cannot be applied directly to the case where the diffusion rates $d_1$ and $d_2$ are not equal. 

\vspace{5mm}

\textbf{Q2}: \textit{In \eqref{eqn: L-V BVP  before scaled}, when $d_1\neq d_2$, can upper and lower bounds of $u+v$ be obtained?}

\vspace{5mm}

As for the answer to \textbf{Q2}, it seems as far as we know, not available in the literature. To give an affirmative answer to this question, we develop a new but elementary approach. Moreover, employing this approach leads to a affirmative answer to the following question which is more general than \textbf{Q2}:

\vspace{5mm}

\textbf{Q3}: \textit{In \eqref{eqn: L-V BVP  before scaled}, when $d_1\neq d_2$, can upper and lower bounds of $\alpha\,u+\beta\,v$, where $\alpha,\beta>0$ are arbitrary constants, be given?}

\vspace{5mm}



By adding the two equations in \eqref{eqn: L-V BVP  before scaled}, we obtain an equation involving $p(x)=\alpha\,u(x)+\beta\,v(x)$ and $q(x)=d_1\,\alpha\,u(x)+d_2\,\beta\,v(x)$, i.e. 
\begin{align}\label{eq: q''+p'+f+g=0 before scale}
  0&=\alpha\,\big(d_1\,u_{xx}+\theta\,u_{x}+u\,(\sigma_1-c_{11}\,u-c_{12}\,v)\big)+
      \beta\,\big(d_2\,v_{xx}+\theta\,v_{x}+v\,(\sigma_2-c_{21}\,u-c_{22}\,v)\big)\notag\\[3mm]
  &=q''(x)+\theta\,p'(x)+ 
  \alpha\,u\,(\sigma_1-c_{11}\,u-c_{12}\,v)+
    \beta\,v\,(\sigma_2-c_{21}\,u-c_{22}\,v)\notag\\[3mm]
  &=q''(x)+\theta\,p'(x)+F(u,v).
\end{align}
For the sake of clear exposition we shall assume from now on that $F(u,v)=  \alpha\,u\,(\sigma_1-c_{11}\,u-c_{12}\,v)+\beta\,v\,(\sigma_2-c_{21}\,u-c_{22}\,v)$.
The case where $d_1=d_2$ or $p(x)$ is a non-zero constant multiple of $q(x)$ has been considered in \cite{Chen&Hung15Nonexistence}. A mathematical difficulty arises as a consequence of the fact that the approach used in \cite{Chen&Hung15Nonexistence} cannot be directly applied, when $d_1\neq d_2$ in the last equation since $p(x)$ no long can be written as a constant multiple of $q(x)$ and such $p(x)$ and $q(x)$ are involved in a \textit{single equation} like \eqref{eq: q''+p'+f+g=0 before scale}. The approach proposed here can be employed to give estimates of $p(x)$ in the case where $p(x)$ and $q(x)$ are involved in the single equation \eqref{eq: q''+p'+f+g=0 before scale}. One of the main results of this paper is the N-barrier maximum principle (Theorem~\ref{thm: lb<q(x)<ub}), which gives an affirmative answer to \textbf{Q3}.

The rest of the paper is organized as follows. In the next section, the main results of this paper, including the N-barrier maximum principle (Theorem~\ref{thm: lb<q(x)<ub}) and two applications (Theorem~\ref{thm: Existence 3 species} and Theorem~\ref{thm: Nonexistence 3 species}) to the system of three species \eqref{eqn: compet L-V sys of 3 species with diffu}, are summarized. We prove in Section~\ref{sec: main result} the N-barrier maximum principle by means of the construction of N-barriers depending on various conditions. Under certain restrictions on the parameters, the existence of exact traveling waves solutions for \eqref{eqn: compet L-V sys of 3 species with diffu} (Theorem~\ref{thm: exact (1,0,0)-(u,v,0) wave}) is presented in Section~\ref{sec: exact (1,0,0)-(u,v,0) waves}. Finally, Section~\ref{sec: Applications of N-barrier maxi principle } is devoted to the proofs of Theorem~\ref{thm: Existence 3 species} and Theorem~\ref{thm: Nonexistence 3 species}.  





\vspace{5mm}
\setcounter{equation}{0}
\setcounter{figure}{0}
\setcounter{subfigure}{0}
\section{Statement of main theorems}\label{sec: statement of theorems}
\vspace{5mm}

\begin{thm}[\textbf{N-barrier maximum principle}]\label{thm: lb<q(x)<ub}
Under either $\mathbf{[S]}$ or $\mathbf{[W]}$, we assume that $(u(x),v(x))$ with $u(x), v(x)>0$ for $x\in\mathbb{R}$ satisfies the boundary value problem \eqref{eqn: L-V BVP  before scaled}.
For any $\alpha, \beta>0$, we have
\begin{equation}
q_{\ast}
\leq \alpha\,u(x)+\beta\,v(x)\leq 
q^{\ast}, \quad x\in\mathbb{R},
\end{equation}
where 
\begin{equation}\label{eqn: lower bed of q in statement of main theorems}
q_{\ast}=\min\bigg[\alpha\,\min\Big[\frac{\sigma_1}{c_{11}},\frac{\sigma_2}{c_{21}}\Big],\beta\,\min\Big[\frac{\sigma_2}{c_{22}},\frac{\sigma_1}{c_{12}}\Big]\bigg]\min\Big[\frac{d_1}{d_2},\frac{d_2}{d_1}\Big]
\end{equation}
and
\begin{equation}\label{eqn: upper bed of q in statement of main theorems}
q^{\ast}=\max\bigg[\alpha\,\max\Big[\frac{\sigma_1}{c_{11}},\frac{\sigma_2}{c_{21}}\Big],\beta\,\max\Big[\frac{\sigma_2}{c_{22}},\frac{\sigma_1}{c_{12}}\Big]\bigg]\max\Big[\frac{d_1}{d_2},\frac{d_2}{d_1}\Big].
\end{equation}
In particular,
\begin{itemize}
  \item [(I)] (\textbf{Lower bounds for $q=q(x)$})  when the differential equations in \eqref{eqn: L-V BVP  before scaled} are replaced by the differential inequalities
\begin{equation}\label{eqn: L-V <0}
\begin{cases}
\vspace{3mm}
d_1\,u_{xx}+\theta\,u_{x}+u\,(\sigma_1-c_{11}\,u-c_{12}\,v)\leq0, \quad x\in\mathbb{R}, \\
d_2\,v_{xx}\hspace{0.8mm}+\theta\,v_{x}+v\,(\sigma_2-c_{21}\,u-c_{22}\,v)\leq0,\quad x\in\mathbb{R}, \\
\end{cases}
\end{equation}  
we have $\alpha\,u(x)+\beta\,v(x)\geq q_{\ast}$ for $x\in\mathbb{R}$;
  \item [(II)] (\textbf{Upper bounds for $q=q(x)$}) when the differential equations in \eqref{eqn: L-V BVP  before scaled} are replaced by the differential inequalities
\begin{equation}\label{eqn: L-V >0}
\begin{cases}
\vspace{3mm}
d_1\,u_{xx}+\theta\,u_{x}+u\,(\sigma_1-c_{11}\,u-c_{12}\,v)\geq0, \quad x\in\mathbb{R}, \\
d_2\,v_{xx}\hspace{0.8mm}+\theta\,v_{x}+v\,(\sigma_2-c_{21}\,u-c_{22}\,v)\geq0,\quad x\in\mathbb{R}, \\
\end{cases}
\end{equation}  
we have $\alpha\,u(x)+\beta\,v(x)\leq q^{\ast}$ for $x\in\mathbb{R}$.
\end{itemize}
\end{thm}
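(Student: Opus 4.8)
The plan is to reduce the two-sided bound to the two one-sided statements (I) and (II). Since a solution of \eqref{eqn: L-V BVP  before scaled} satisfies both \eqref{eqn: L-V <0} and \eqref{eqn: L-V >0} (with equality), the estimate $q_\ast\le\alpha u+\beta v\le q^\ast$ follows immediately from (I) and (II); moreover (I) and (II) are interchanged by the symmetry that reverses the direction of the inequalities, so I would prove only the upper bound (II) in detail. Throughout I set $p=\alpha u+\beta v$ and $q=d_1\alpha u+d_2\beta v$ as in \eqref{eq: q''+p'+f+g=0 before scale}, and record the elementary comparison $m\,p\le q\le\mu\,p$ with $m=\min(d_1,d_2)$, $\mu=\max(d_1,d_2)$. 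Writing $s^\ast=\max\!\big[\alpha\max(\tfrac{\sigma_1}{c_{11}},\tfrac{\sigma_2}{c_{21}}),\,\beta\max(\tfrac{\sigma_2}{c_{22}},\tfrac{\sigma_1}{c_{12}})\big]$, so that $q^\ast=(\mu/m)\,s^\ast$, this comparison shows that any control of the form $q\le\mu\,s^\ast$ yields $p\le q^\ast$, and that bounding the single quantity $p$ is essentially equivalent to bounding $q$.

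First I would pin down the behaviour at infinity. Because $(u,v)$ tends to the equilibria $\mathbf{e}_2$ and $\mathbf{e}_4$, standard elliptic estimates give $u',v',u'',v''\to0$, so $p',q'\to0$ and $p,q$ have finite limits at $\pm\infty$; since both limit points lie on the curve $F=0$, a direct computation shows these limit values already respect the asserted bounds. Hence any violation of the upper bound must occur at an interior point, and the super-level set $\Omega=\{x:p(x)>q^\ast\}$ is then a nonempty union of bounded open intervals $(a_i,b_i)$ with $p(a_i)=p(b_i)=q^\ast$.

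The analytic heart is the interplay between the sign of $F$ and an integration of \eqref{eq: q''+p'+f+g=0 before scale}. Using the two nullcline lines $\sigma_1-c_{11}u-c_{12}v=0$ and $\sigma_2-c_{21}u-c_{22}v=0$, whose extreme intercepts are exactly the data encoded in $s^\ast$, I would construct a piecewise-linear barrier — the \emph{N-barrier} — separating the bounded region $\{F\ge0\}$ from the far field, so that $F<0$ holds throughout $\Omega$ (note $q^\ast\ge s^\ast$). Integrating $q''+\theta p'+F=0$ over each component and summing gives
\begin{equation}
\sum_i\big(q'(b_i)-q'(a_i)\big)+\theta\sum_i\big(p(b_i)-p(a_i)\big)+\int_\Omega F\,dx=0 .
\end{equation}
Because $\Omega$ is a super-level set of $p$, the telescoping middle sum vanishes ($p=q^\ast$ on $\partial\Omega$), while $\int_\Omega F<0$; this forces $\sum_i(q'(b_i)-q'(a_i))>0$. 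In the degenerate case $d_1=d_2$ one has $q=d_1p$, so $q'(b_i)-q'(a_i)=d_1(p'(b_i)-p'(a_i))\le0$ (the trajectory enters $\Omega$ with $p'\ge0$ and leaves with $p'\le0$), an immediate contradiction that recovers the principle with $q^\ast=s^\ast$.

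The main obstacle is precisely the case $d_1\ne d_2$. Then $q$ is no longer proportional to $p$: integrating over a super-level set of $p$ annihilates the $\theta p'$-term but leaves the flux jumps $q'(b_i)-q'(a_i)$ without a sign, whereas integrating over a super-level set of $q$ signs those flux jumps but reinstates an uncontrolled $\theta\sum_i(p(b_i)-p(a_i))$ contribution. This dichotomy is exactly why the argument of \cite{Chen&Hung15Nonexistence}, which relied on $q$ being a constant multiple of $p$, does not extend. The way out is to let the barrier live in the $(u,v)$-plane as a piecewise-linear N-shaped curve whose slopes are dictated by the nullcline intercepts, and to integrate against the region it cuts out so that the definite sign of $F$ beyond it is played off against both flux contributions simultaneously; the unavoidable gap between the $p$-level and the $q$-level of this barrier is what produces the multiplicative factor $\max(d_1,d_2)/\min(d_1,d_2)$ in $q^\ast$. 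Turning this geometric picture into the quantitative inequality — calibrating the barrier's slopes and height so that the $\theta p'$-term and the flux terms are dominated at once — is the step I expect to be the crux. The lower bound (I) then follows by applying the same construction to sub-level sets with all inequalities reversed.
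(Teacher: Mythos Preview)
Your proposal correctly identifies the central difficulty --- that for $d_1\ne d_2$ neither a super-level set of $p$ nor one of $q$ alone makes both the flux term and the $\theta p'$-term signed --- but it stops precisely at the point where the actual idea is needed. You write that ``turning this geometric picture into the quantitative inequality \ldots\ is the step I expect to be the crux'' and leave it there; that step is the whole content of the N-barrier method, so the proposal as written is incomplete.

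The mechanism in the paper is different from (and more concrete than) the ``piecewise-linear N-shaped curve'' you sketch. The barrier is not a single broken line but a stack of \emph{three straight lines} in the $uv$-plane: two $q$-levels $\alpha d_1 u+\beta d_2 v=\lambda_1$ and $=\lambda_2$, together with a $p$-level $\alpha u+\beta v=\eta$ chosen so that, in the first quadrant, the $p$-line is sandwiched between the two $q$-lines. One argues by contradiction from an interior extremum $z$ of $q$ beyond the $\lambda_1$-line; because the boundary data sit on the far side of the $\lambda_2$-line, the trajectory must cross $q=\lambda_2$ at some first points $z_1<z<z_2$. One then integrates \eqref{eq: q''+p'+f+g=0 before scale} not over a full level set but over the one-sided interval $[z_1,z]$ if $\theta\le0$ and $[z,z_2]$ if $\theta\ge0$. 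The sandwiching of the $p$-line guarantees $p(z)$ and $p(z_1)$ (resp.\ $p(z_2)$) lie on opposite sides of $\eta$, so $\theta(p(z)-p(z_1))$ has the right sign for the chosen direction; the crossing of $q=\lambda_2$ signs $q'(z_1)$ (resp.\ $q'(z_2)$); and the sign of $F$ in the region enclosed by the nullclines gives the integral term. The loss factor $\max(d_1/d_2,d_2/d_1)$ arises exactly from the requirement that a $p$-line at height $\eta$ fit between the two $q$-lines at heights $\lambda_1,\lambda_2$. Your attempt to sum over all components of a single level set and rely on telescoping cannot reproduce this, because the two contributions you need to sign live on \emph{different} level sets; the paper decouples them by the nested three-line construction and by choosing the direction of integration according to the sign of $\theta$.
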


\vspace{5mm}
We would like to add a few remarks concerning Theorem~\ref{thm: lb<q(x)<ub}. 

\begin{itemize}
  \item [(i)] In addition to the diffusion rates $d_1$ and $d_2$, upper and lower bounds $q^{\ast}$ and $q_{\ast}$ depend only on the $u$-intersection (i.e. $\frac{\sigma_1}{c_{11}}$) of the line $\sigma_1-c_{11}\,u-c_{12}\,v=0$ and $v$-intersection (i.e. $\frac{\sigma_2}{c_{22}}$) of the line $\sigma_2-c_{21}\,u-c_{22}\,v=0$ as well as the $v$-intersection (i.e. $\frac{\sigma_1}{c_{12}}$) of the line $\sigma_1-c_{11}\,u-c_{12}\,v=0$ and $u$-intersection (i.e. $\frac{\sigma_2}{c_{21}}$) of the line $\sigma_2-c_{21}\,u-c_{22}\,v=0$, respectively. We note that $\textbf{e}_2=(\frac{\sigma_1}{c_{11}},0)$ and $\textbf{e}_3=(0,\frac{\sigma_2}{c_{22}})$ represent the two \textit{competitively exclusive states}. When $\alpha=\beta=1$, the above observation together with the fact that the estimate of $\alpha\,u(x)+\beta\,v(x)$ given in Theorem~\ref{thm: lb<q(x)<ub} does not explicitly depend on the propagating speed $\theta$ gives a possible answer to \textbf{Q1}. 
  

  \item [(ii)]  Letting $\alpha=1$ and $\beta\rightarrow0$ in Theorem~\ref{thm: lb<q(x)<ub}, we obtain
  \begin{equation}
0\leq u(x)\leq \max\Big[\frac{\sigma_1}{c_{11}},\frac{\sigma_2}{c_{21}}\Big] \max\Big[\frac{d_1}{d_2},\frac{d_2}{d_1}\Big].
\end{equation}
Letting $\beta=1$ and $\alpha\rightarrow0$ in Theorem~\ref{thm: lb<q(x)<ub}, we obtain
  \begin{equation}
0\leq v(x)\leq \max\Big[\frac{\sigma_2}{c_{22}},\frac{\sigma_1}{c_{12}}\Big] \max\Big[\frac{d_1}{d_2},\frac{d_2}{d_1}\Big].
\end{equation}
\item [(iii)] 
 When $\mathbf{[S]}$: $\frac{\sigma_1}{c_{11}}>\frac{\sigma_2}{c_{21}}$ and $\frac{\sigma_2}{c_{22}}>\frac{\sigma_1}{c_{12}}$ holds, $q^{\ast}$ and $q_{\ast}$ are given by 
 \begin{equation}
q_{\ast}=\min\bigg[\alpha\,\frac{\sigma_2}{c_{21}},\beta\,\frac{\sigma_1}{c_{12}}\bigg]\min\Big[\frac{d_1}{d_2},\frac{d_2}{d_1}\Big]
\end{equation}
and
\begin{equation}
q^{\ast}=\max\bigg[\alpha\,\frac{\sigma_1}{c_{11}},\beta\,\frac{\sigma_2}{c_{22}}\bigg]\max\Big[\frac{d_1}{d_2},\frac{d_2}{d_1}\Big].
\end{equation}
When $\mathbf{[W]}$: $\frac{\sigma_1}{c_{11}}<\frac{\sigma_2}{c_{21}}$ and $\frac{\sigma_2}{c_{22}}<\frac{\sigma_1}{c_{12}}$ holds, $q^{\ast}$ and $q_{\ast}$ are given by 
 \begin{equation}
q_{\ast}=\min\bigg[\alpha\,\frac{\sigma_1}{c_{11}},\beta\,\frac{\sigma_2}{c_{22}}\bigg]\min\Big[\frac{d_1}{d_2},\frac{d_2}{d_1}\Big]
\end{equation}
and
\begin{equation}
q^{\ast}=\max\bigg[\alpha\,\frac{\sigma_2}{c_{21}},\beta\,\frac{\sigma_1}{c_{12}}\bigg]\max\Big[\frac{d_1}{d_2},\frac{d_2}{d_1}\Big].
\end{equation}

\item [(iv)] By letting $\alpha=\beta=1$, we have
\begin{equation}
q_{\ast}
\leq u(x)+v(x)\leq 
q^{\ast}, \quad x\in\mathbb{R},
\end{equation}
where 
\begin{equation}
q_{\ast}=\min\bigg[\min\Big[\frac{\sigma_1}{c_{11}},\frac{\sigma_2}{c_{21}}\Big],\min\Big[\frac{\sigma_2}{c_{22}},\frac{\sigma_1}{c_{12}}\Big]\bigg]\min\Big[\frac{d_1}{d_2},\frac{d_2}{d_1}\Big]
\end{equation}
and
\begin{equation}
q^{\ast}=\max\bigg[\max\Big[\frac{\sigma_1}{c_{11}},\frac{\sigma_2}{c_{21}}\Big],\max\Big[\frac{\sigma_2}{c_{22}},\frac{\sigma_1}{c_{12}}\Big]\bigg]\max\Big[\frac{d_1}{d_2},\frac{d_2}{d_1}\Big].
\end{equation}
This answers \textbf{Q2}.
\end{itemize}

\vspace{5mm}


Let us return to the system of three species \eqref{eqn: compet L-V sys of 3 species with diffu} and consider traveling wave solutions
\begin{equation}\label{eqn: traveling wave (u,v)=(U,V)}
(u(y,t),v(y,t),w(y,t))=(u(x),v(x),w(x)), \quad x=y-\theta \,t
\end{equation}
satisfying the following boundary value problem
\begin{equation}\label{eqn: L-V systems of three species (TWS)}
\begin{cases}
\vspace{3mm}
d_1\,u_{xx}\hspace{-0.5mm}+\theta \,u_x\hspace{-0.5mm}+u\,(\sigma_1-c_{11}\,u-c_{12}\,v-c_{13}\,w)=0, \quad x\in\mathbb{R}, \\
\vspace{3mm}
d_2\,v_{xx}+\theta \,v_x+v\,(\sigma_2-c_{21}\,u-c_{22}\,v-c_{23}\,w)=0, \quad x\in\mathbb{R},\\
\vspace{3mm}
d_3\,w_{xx}\hspace{-1mm}+\theta \,w_x\hspace{-1.5mm}+\hspace{-0.5mm}w\,(\sigma_3-c_{31}\,u-c_{32}\,v-c_{33}\,w)=0, \quad x\in\mathbb{R},\\
(u,v,w)(-\infty)=(\frac{\sigma_1}{c_{11}},0,0), \quad (u,v,w)(\infty)=(u^{\ast},v^{\ast},0).
\end{cases}
\end{equation}
Here again $\theta$ is the propagation speed of the traveling wave. As mentioned in the beginning of introduction, we will study the influence of an exotic species $W$ on other native species $U$ and $V$ in terms of \eqref{eqn: L-V systems of three species (TWS)}. The first question we shall ask is whether competitor-mediated coexistence occurs for $u$, $v$, and $w$ in the system \eqref{eqn: L-V systems of three species (TWS)}. If the three species do coexist under certain conditions, then what will be the profiles of $u(x)$, $v(x)$, and $w(x)$? The result in \cite{hung2012JJIAM} indicates that when $w(x)$ is absent in \eqref{eqn: L-V systems of three species (TWS)}, the system of two species \eqref{eqn: L-V BVP  before scaled} under certain conditions admits solutions $(u(x),v(x))$ having the profiles with $u(x)$ being monotonically decreasing and $v(x)$ being monotonically increasing. Moreover, we see from the profiles of $u(x)$ and $v(x)$ that $u(x)$ and $v(x)$ dominate the neighborhood of $x=-\infty$ and the neighborhood of $x=\infty$, respectively. These facts lead us to the expectation that, the profile of $w(x)$ must be \textit{pulse-like} ($w(x)$ is a pulse if $w(-\infty)=w(\infty)=0$ and $w(x)>0$ for $x\in\mathbb{R}$) if it exists since $w$ will prevail only when $u$ and $v$ are not dominant.



To simplify the problem, we restrict ourself to the case of $\sigma_1=c_{11}$ in Section~\ref{sec: exact (1,0,0)-(u,v,0) waves} and denote a solution of \eqref{eqn: L-V systems of three species (TWS)} with $\sigma_1=c_{11}$ by $(1,0,0)$-$(u^{\ast},v^{\ast},0)$ wave for convenience. Although we can find exact $(1,0,0)$-$(u^{\ast},v^{\ast},0)$ waves for \eqref{eqn: L-V systems of three species (TWS)} (See Theorem~\ref{thm: exact (1,0,0)-(u,v,0) wave} and we remark that when $\sigma_1\neq c_{11}$, a similar result remains valid.) under certain restrictions on the parameters, it remains an open problem, however, to establish the existence of solutions for \eqref{eqn: L-V systems of three species (TWS)} under more general conditions. In spite of this fact, when we consider the situation where the influence of the invading species $W$ on the native species $U$ and $V$ is of no significance, i.e. $c_{13}, c_{23}\approx0$ in \eqref{eqn: L-V systems of three species (TWS)}, the limiting case $c_{13}, c_{23}\rightarrow0^{+}$ leads to the boundary value problem
\begin{equation}\label{eqn: L-V systems of three species (TWS) c13=c23=0}
\begin{cases}
\vspace{3mm}
d_1\,u_{xx}\hspace{-0.5mm}+\theta \,u_x\hspace{-0.5mm}+u\,(\sigma_1-c_{11}\,u-c_{12}\,v)=0, \quad x\in\mathbb{R}, \\
\vspace{3mm}
d_2\,v_{xx}+\theta \,v_x+v\,(\sigma_2-c_{21}\,u-c_{22}\,v)=0, \quad x\in\mathbb{R},\\
\vspace{3mm}
d_3\,w_{xx}\hspace{-1mm}+\theta \,w_x\hspace{-1.5mm}+\hspace{-0.5mm}w\,(\sigma_3-c_{31}\,u-c_{32}\,v-c_{33}\,w)=0, \quad x\in\mathbb{R},\\
(u,v,w)(-\infty)=(\frac{\sigma_1}{c_{11}},0,0), \quad (u,v,w)(\infty)=(u^{\ast},v^{\ast},0).
\end{cases}
\end{equation}
Under the assumption of the existence of solutions $(u(x),v(x))=(\tilde{u}(x),\tilde{v}(x))$ for the system of two species \eqref{eqn: L-V BVP  before scaled} (\cite{Hou&Leung08}, \cite{hung2012JJIAM}, \cite{Kanel06}, \cite{Kanel&Zhou96estimatespeed}), it will be proved in Section~\ref{subsec: Application to the existence} that under certain conditions, a solution $w(x)$ of the third equation in \eqref{eqn: L-V systems of three species (TWS) c13=c23=0}, i.e. the non-autonomous Fisher equation for $w=w(x)$
\begin{equation}\label{eqn: eqn of Fisher type statement of main theorems}
d_3\,w_{xx}+\theta \,w_x+w\,(\sigma_3-c_{31}\,\tilde{u}-c_{32}\,\tilde{v}-c_{33}\,w)=0, \quad x\in \mathbb{R}
\end{equation}
can be found applying the \textit{supersolution-subsolution method}, thereby establishing the existence of solutions for \eqref{eqn: L-V systems of three species (TWS) c13=c23=0}. We remark that, as an application of the N-barrier maximum principle, upper and lower bounds of $c_{31}\,\tilde{u}+c_{32}\,\tilde{v}$ are used in constructing supersolutions and subsolutions of \eqref{eqn: eqn of Fisher type statement of main theorems}.

\vspace{5mm}

\begin{thm}[\textbf{Existence of traveling wave solutions for three competing species}]
\label{thm: Existence 3 species}
Assume either $\mathbf{[S]}$ or $\mathbf{[W]}$. Suppose that there exist $u=\tilde{u}(x)$ and $v=\tilde{v}(x)$ which solve the first two equations in \eqref{eqn: L-V systems of three species (TWS) c13=c23=0} for some $\theta$, $d_i$, $\sigma_i$, $c_{ii}$ $(i=1,2)$, and $c_{ij}$ $(i,j=1,2  \ \text{with} \;i\neq j)$ and satisfy the boundary conditions $(u,v)(-\infty)=(1,0),(u,v)(\infty)=(u^{\ast},v^{\ast})$. Let 
\begin{equation}
\underaccent\bar{q}=\min\bigg[c_{31}\min\Big[\frac{\sigma_1}{c_{11}},\frac{\sigma_2}{c_{21}}\Big],c_{32}\min\Big[\frac{\sigma_2}{c_{22}},\frac{\sigma_1}{c_{12}}\Big]\bigg]\min\Big[\frac{d_1}{d_2},\frac{d_2}{d_1}\Big]
\end{equation}
and
\begin{equation}
\bar{q}=\max\bigg[c_{31}\max\Big[\frac{\sigma_1}{c_{11}},\frac{\sigma_2}{c_{21}}\Big],c_{32}\max\Big[\frac{\sigma_2}{c_{22}},\frac{\sigma_1}{c_{12}}\Big]\bigg]\max\Big[\frac{d_1}{d_2},\frac{d_2}{d_1}\Big].
\end{equation}
Assume that the following hypotheses hold:
\begin{itemize}
\item [$\mathbf{[H1]}$] $\sigma_3<c_{31}$, $\sigma_3<c_{31}\,u^{\ast}+c_{32}\,v^{\ast}$; 
\item [$\mathbf{[H2]}$] $-c_{33}\, \bar{K}-\underaccent\bar{q}+\sigma _3\leq0$;
\item [$\mathbf{[H3]}$] $4 \,\theta ^2-4 \left(c_{33}\,\underaccent\bar{K}+6\,d_3\right)\left(-c_{33}\, \underaccent\bar{K}-2\,d_3-\bar{q}+\sigma _3\right)\leq0$; 
\item [$\mathbf{[H4]}$] $\bar{K}\geq \underaccent\bar{K}>0$;
\end{itemize}
Then \eqref{eqn: L-V systems of three species (TWS) c13=c23=0} has a positive solution $(\tilde{u}(x),\tilde{v}(x),w(x))$ with $w(x)\not\equiv0$ for $x\in\mathbb{R}$ and $w(x)\rightarrow0$ as $x\rightarrow\pm\infty$. Moreover, $\underaccent\bar{w}(x)\leq w(x)\leq \bar{w}(x)$ for $x\in\mathbb{R}$, where $\underaccent\bar{w}(x)=\underaccent\bar{K}\,\big[\,1-\tanh^2(x)\,\big]$ and $\bar{w}(x)=\bar{K}$. 

\end{thm}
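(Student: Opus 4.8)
The strategy is exactly the \emph{supersolution--subsolution method} applied to the scalar non-autonomous Fisher equation \eqref{eqn: eqn of Fisher type statement of main theorems} for $w$, with the coefficient $c_{31}\tilde u+c_{32}\tilde v$ controlled by the N-barrier maximum principle. First I would invoke Theorem~\ref{thm: lb<q(x)<ub} with $\alpha=c_{31}$ and $\beta=c_{32}$ applied to the pair $(\tilde u,\tilde v)$, which solves the two-species system \eqref{eqn: L-V BVP  before scaled}; this yields the a priori two-sided bound
\begin{equation}
\underaccent\bar{q}\le c_{31}\,\tilde u(x)+c_{32}\,\tilde v(x)\le \bar q,\qquad x\in\mathbb{R},
\end{equation}
with $\underaccent\bar q$ and $\bar q$ as defined in the statement. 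Writing $\mathcal{L}[w]:=d_3\,w_{xx}+\theta\,w_x+w\,(\sigma_3-c_{31}\tilde u-c_{32}\tilde v-c_{33}w)$, the goal is to produce an ordered pair $\underaccent\bar w\le\bar w$ with $\mathcal{L}[\underaccent\bar w]\ge0$ and $\mathcal{L}[\bar w]\le0$, and then to squeeze a solution in between.

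For the supersolution I would take the constant $\bar w\equiv\bar K$. Since its derivatives vanish, $\mathcal{L}[\bar w]=\bar K\,(\sigma_3-c_{31}\tilde u-c_{32}\tilde v-c_{33}\bar K)$, and bounding $c_{31}\tilde u+c_{32}\tilde v$ below by $\underaccent\bar q$ reduces the required inequality $\mathcal{L}[\bar w]\le0$ precisely to hypothesis $\mathbf{[H2]}$. For the subsolution I would take the pulse $\underaccent\bar w(x)=\underaccent\bar K\,[\,1-\tanh^2(x)\,]$. Substituting and factoring out the nonnegative quantity $\underaccent\bar K\,[\,1-\tanh^2(x)\,]$, the sign of $\mathcal{L}[\underaccent\bar w]$ is governed by a bracketed factor which, after using $c_{31}\tilde u+c_{32}\tilde v\le\bar q$ and the identities for the first and second derivatives of $1-\tanh^2$, becomes a quadratic in $t=\tanh(x)$ with positive leading coefficient $c_{33}\underaccent\bar K+6\,d_3$. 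Requiring this quadratic to be nonnegative for all $t$ is exactly the discriminant condition $\mathbf{[H3]}$, so $\mathcal{L}[\underaccent\bar w]\ge0$ on $\mathbb{R}$. The ordering $\underaccent\bar w\le\bar w$ follows from $1-\tanh^2\le1$ together with $\mathbf{[H4]}$.

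With the ordered pair in hand, I would run the standard monotone-iteration/comparison scheme: solve the Dirichlet problem for $\mathcal{L}$ on each bounded interval $[-L,L]$ between $\underaccent\bar w$ and $\bar w$, obtain solutions $w_L$ trapped in $[\underaccent\bar w,\bar w]$, and pass to the limit $L\to\infty$ using interior elliptic (Schauder) estimates and a diagonal/Arzel\`a--Ascoli argument to extract a solution $w$ on $\mathbb{R}$ with $\underaccent\bar w\le w\le\bar w$. Since $\underaccent\bar w(0)=\underaccent\bar K>0$, the limit is nontrivial, giving $w\not\equiv0$.

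The main obstacle is the behaviour at infinity: the constant supersolution $\bar K$ yields only boundedness, not the decay $w(x)\to0$ as $x\to\pm\infty$ demanded by the $(1,0,0)$--$(u^\ast,v^\ast,0)$ connection. Here I would exploit $\mathbf{[H1]}$: since $(\tilde u,\tilde v)(-\infty)=(1,0)$ and $(\tilde u,\tilde v)(\infty)=(u^\ast,v^\ast)$, the linear growth coefficient $\sigma_3-c_{31}\tilde u-c_{32}\tilde v$ tends to $\sigma_3-c_{31}<0$ and to $\sigma_3-c_{31}u^\ast-c_{32}v^\ast<0$ respectively, hence it is bounded above by some $-\delta<0$ for $|x|\ge x_0$. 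On $x\ge x_0$ the nonnegativity of $w$ then gives $d_3\,w_{xx}+\theta\,w_x-\delta\,w\ge0$, and comparing $w$ against the exponential barrier $\bar K\,e^{\lambda x_0}e^{-\lambda x}$ (with $\lambda>0$ chosen so that $d_3\lambda^2-\theta\lambda-\delta\le0$) via the comparison principle forces $w(x)\to0$ as $x\to+\infty$; the mirror argument on $x\le-x_0$ handles $x\to-\infty$. This establishes the required asymptotics and completes the proof.
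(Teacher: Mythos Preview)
Your proposal is correct and follows essentially the same route as the paper: apply Theorem~\ref{thm: lb<q(x)<ub} with $(\alpha,\beta)=(c_{31},c_{32})$ to bound $c_{31}\tilde u+c_{32}\tilde v$, verify that $\bar w\equiv\bar K$ and $\underaccent\bar w=\underaccent\bar K\,[1-\tanh^2 x]$ are an ordered super/subsolution pair via $\mathbf{[H2]}$--$\mathbf{[H4]}$, and then use $\mathbf{[H1]}$ for the decay of $w$ at $\pm\infty$. The only difference is packaging: where you sketch the monotone-iteration-on-$[-L,L]$ plus Arzel\`a--Ascoli argument and the exponential-barrier decay argument by hand, the paper simply cites Theorem~2.8 of Koch~Medina--Sch\"atti for the existence step and a lemma of Berestycki et al.\ (Lemma~\ref{lem: lemma from Berestycki et al}) for the decay step, the latter being exactly the statement that a bounded solution decays once the linearized zero-order coefficient has negative limits at $\pm\infty$.
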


\vspace{5mm}
Another consequence of the N-barrier maximum principle concerns the nonexistence of solutions for \eqref{eqn: L-V systems of three species (TWS)}. In other words, we look for conditions on the parameters under which there exists no positive solution $(u(x),v(x),w(x))$ for \eqref{eqn: L-V systems of three species (TWS)}. 

\vspace{5mm}

\begin{thm}[\textbf{Nonexistence of traveling wave solutions for three competing species}]\label{thm: Nonexistence 3 species}
Let $\Sigma_1=\sigma_1\,c_{33}-\sigma_3\,c_{13}$ and $\Sigma_2=\sigma_2\,c_{33}-\sigma_3\,c_{23}$. Assume that the following hypotheses hold:
\begin{itemize}
\item [$\mathbf{[A1]}$] $\Sigma_1,\Sigma_2>0$; 
\item [$\mathbf{[A2]}$] either $c_{21}\,\Sigma_1>c_{11}\,\Sigma_2, c_{12}\,\Sigma_2>c_{22}\,\Sigma_1$ or $c_{21}\,\Sigma_1<c_{11}\,\Sigma_2, c_{12}\,\Sigma_2<c_{22}\,\Sigma_1$;
\item [$\mathbf{[A3]}$] 
$\min\bigg[c_{31}\,d_1\min\Big[\frac{\displaystyle\Sigma_1}{\displaystyle c_{11}},\frac{\displaystyle\Sigma_2}{\displaystyle c_{21}}\Big],c_{32}\,d_2\min\Big[\frac{\displaystyle\Sigma_2}{\displaystyle c_{22}},\frac{\displaystyle\Sigma_1}{\displaystyle c_{12}}\Big]\bigg]\min\Big[\frac{\displaystyle d_1}{\displaystyle d_2},\frac{\displaystyle d_2}{\displaystyle d_1}\Big]\geq\sigma_3\,c_{33}$.
\end{itemize}
Then \eqref{eqn: L-V systems of three species (TWS)} has no positive solution $(u(x),v(x),w(x))$.
\end{thm}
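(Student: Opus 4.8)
The plan is to argue by contradiction. Suppose \eqref{eqn: L-V systems of three species (TWS)} has a positive solution $(u,v,w)$. The idea is to use the third ($w$-)equation to pass from the first two equations to a \emph{two-species} differential inequality of the form \eqref{eqn: L-V <0}, but with the intrinsic growth rates $\sigma_1,\sigma_2$ replaced by the shifted rates $\Sigma_1/c_{33}$ and $\Sigma_2/c_{33}$. The N-barrier maximum principle (Theorem~\ref{thm: lb<q(x)<ub}) then delivers a uniform \emph{lower} bound for $c_{31}u+c_{32}v$, and $\mathbf{[A3]}$ is essentially the requirement that this lower bound be no smaller than $\sigma_3$; this contradicts what the $w$-equation forces at an interior maximum of $w$.

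First I would bound $w$ from above. Since $w>0$ on $\mathbb{R}$ with $w(\pm\infty)=0$, $w$ attains a positive interior maximum at some $x_0$, where $w_x(x_0)=0$ and $w_{xx}(x_0)\le0$. Evaluating the $w$-equation at $x_0$ and dividing by $w(x_0)>0$ gives $c_{31}u(x_0)+c_{32}v(x_0)+c_{33}w(x_0)\le\sigma_3$, whence, using $u,v>0$, we get $w(x)\le w(x_0)<\sigma_3/c_{33}$ for every $x$. Substituting the uniform bound $w<\sigma_3/c_{33}$ into the first two equations of \eqref{eqn: L-V systems of three species (TWS)} and using $c_{13},c_{23}>0$ and $u,v>0$, the inequality $\sigma_1-c_{13}w>\sigma_1-c_{13}\sigma_3/c_{33}=\Sigma_1/c_{33}$ (and its analogue for $v$) yields
\begin{equation}
d_1u_{xx}+\theta u_x+u\Big(\tfrac{\Sigma_1}{c_{33}}-c_{11}u-c_{12}v\Big)\le0,\qquad d_2v_{xx}+\theta v_x+v\Big(\tfrac{\Sigma_2}{c_{33}}-c_{21}u-c_{22}v\Big)\le0.
\end{equation}
By $\mathbf{[A1]}$ the shifted rates $\Sigma_1/c_{33},\Sigma_2/c_{33}$ are positive, so $(u,v)$ solves the two-species inequality system \eqref{eqn: L-V <0} for these rates.

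Next I would check that this shifted system meets the hypotheses of Theorem~\ref{thm: lb<q(x)<ub}. Dividing the inequalities in $\mathbf{[A2]}$ by the positive quantities $c_{11}c_{21}c_{33}$ and $c_{22}c_{12}c_{33}$ shows that $\mathbf{[A2]}$ is precisely the dichotomy $\mathbf{[S]}$ or $\mathbf{[W]}$ for the rates $\Sigma_1/c_{33},\Sigma_2/c_{33}$. Hence part (I) of Theorem~\ref{thm: lb<q(x)<ub}, applied with $\alpha=c_{31}$ and $\beta=c_{32}$, gives $c_{31}u(x)+c_{32}v(x)\ge q_\ast$ for all $x$, where $q_\ast$ is the expression \eqref{eqn: lower bed of q in statement of main theorems} with $(\sigma_1,\sigma_2)$ replaced by $(\Sigma_1/c_{33},\Sigma_2/c_{33})$. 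Condition $\mathbf{[A3]}$ is (up to the diffusion-ratio bookkeeping discussed below) the statement $q_\ast\ge\sigma_3$. Evaluating the global lower bound at $x_0$ then gives $\sigma_3\le q_\ast\le c_{31}u(x_0)+c_{32}v(x_0)<\sigma_3$, a contradiction. Equivalently, integrating the $w$-equation over $\mathbb{R}$ yields $\int_{\mathbb{R}}w\big(\sigma_3-c_{31}u-c_{32}v-c_{33}w\big)\,dx=0$, whose integrand is strictly negative once $c_{31}u+c_{32}v\ge\sigma_3$, again impossible.

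The main obstacle I expect is matching the constant produced by Theorem~\ref{thm: lb<q(x)<ub} to the threshold $\sigma_3c_{33}$ recorded in $\mathbf{[A3]}$, and in particular accounting for the factors $d_1,d_2$ and $\min[d_1/d_2,d_2/d_1]$ that appear there. This amounts to choosing the weights in the N-barrier principle (most naturally a diffusion-weighted pair) so that the shifted lower bound $q_\ast$ lands exactly at $\sigma_3$ under $\mathbf{[A3]}$, and carefully propagating the factor $\min[d_1/d_2,d_2/d_1]$ through the shifted system; this is the one genuinely delicate computation. A minor technical point is to justify the interior maximum of $w$ (immediate from $w>0$ and $w(\pm\infty)=0$) and, if the integral form of the contradiction is preferred, the decay $w_x\to0$ at $\pm\infty$, which follows from boundedness of the solution together with standard elliptic estimates.
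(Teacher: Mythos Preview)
Your approach is correct and essentially identical to the paper's: argue by contradiction, use the interior maximum of $w$ to get $w<\sigma_3/c_{33}$, feed this into the $u$- and $v$-equations to obtain the two-species differential inequalities \eqref{eqn: L-V <0} with shifted rates $\Sigma_i/c_{33}$, then apply part~(I) of Theorem~\ref{thm: lb<q(x)<ub} with $(\alpha,\beta)=(c_{31},c_{32})$ to force $c_{31}u+c_{32}v\ge\sigma_3$ and reach a contradiction at $x_0$. Regarding the $d_1,d_2$ factors you flagged in $\mathbf{[A3]}$: the paper carries the same factors in its proof, and they do not match the stated $q_\ast$ in Theorem~\ref{thm: lb<q(x)<ub} either, so your caution is warranted---but this is a bookkeeping slip in the paper rather than a gap in your argument.
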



\vspace{5mm}
It will be clear from the proof in Section~\ref{sec: main result} that $\mathbf{[A1]}$ and $\mathbf{[A2]}$ assure that the N-barrier maximum principle can be applied in proving Theorem~\ref{thm: Nonexistence 3 species}. We note in particular that when $\sigma_3$ is sufficiently small, $\mathbf{[A3]}$ in Theorem~\ref{thm: Nonexistence 3 species} clearly holds. From the viewpoint of ecology, the result of Theorem~\ref{thm: Nonexistence 3 species} states that as the birth rate $\sigma_3$ of the species $W$ decreases below a threshold, the three species $U$, $V$ and $W$ no longer coexist. Intuitively, due to the weakness of the exotic species $W$, competitor-mediated coexistence cannot occur for the three species $U$, $V$ and $W$ in \eqref{eqn: L-V systems of three species (TWS)}.

\vspace{5mm}
\setcounter{equation}{0}
\setcounter{figure}{0}
\setcounter{subfigure}{0}
\section{N-barrier maximum principle: proof of Theorem~\ref{thm: lb<q(x)<ub}}\label{sec: main result}
\vspace{5mm}


In this section, we use the notations $p(x)=\alpha\,u(x)+\beta\,v(x)$, $q(x)=d_1\,\alpha\,u(x)+d_2\,\beta\,v(x)$, and $F(u,v)=  \alpha\,u\,(\sigma_1-c_{11}\,u-c_{12}\,v)+\beta\,v\,(\sigma_2-c_{21}\,u-c_{22}\,v)$ as in \eqref{eq: q''+p'+f+g=0 before scale}. We begin with a useful lemma. 


\vspace{5mm}

\begin{lem}\label{lem: quadratic curve of F(u,v)=0}
For the quadratic curve $F(u,v)=0$ in the $uv$-plane,
\begin{itemize}
\item [(i)] if $\frac{\sigma_1}{c_{11}}>\frac{\sigma_2}{c_{21}}$ and $\frac{\sigma_2}{c_{22}}>\frac{\sigma_1}{c_{12}}$, then $F(u,v)=0$ represents a hyperbola;
\item [(ii)] if $\frac{\sigma_1}{c_{11}}<\frac{\sigma_2}{c_{21}}$ and $\frac{\sigma_2}{c_{22}}<\frac{\sigma_1}{c_{12}}$, then $F(u,v)=0$ represents a hyperbola, a parabola, or an ellipse depending on the parameters in $F(u,v)=0$. 
\end{itemize}
\end{lem}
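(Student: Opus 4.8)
The plan is to treat $F(u,v)=0$ as a generic second-degree curve $A\,u^2+B\,uv+C\,v^2+D\,u+E\,v+G=0$ and to classify it through the usual affine invariant $\delta=B^2-4AC$. First I would expand $F$ and read off the quadratic coefficients $A=-\alpha\,c_{11}$, $C=-\beta\,c_{22}$, $B=-(\alpha\,c_{12}+\beta\,c_{21})$, noting that the linear part ($D=\alpha\,\sigma_1$, $E=\beta\,\sigma_2$, $G=0$) plays no role in deciding the type. A direct computation gives
\[
\delta=(\alpha\,c_{12}+\beta\,c_{21})^2-4\,\alpha\,\beta\,c_{11}\,c_{22},
\]
so that the shape is governed entirely by the sign of $\delta$: a hyperbola when $\delta>0$, a parabola when $\delta=0$, and an ellipse when $\delta<0$.

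The decisive observation is that the competition hypotheses are equivalent to a comparison between $c_{12}\,c_{21}$ and $c_{11}\,c_{22}$. Multiplying the two inequalities in $\mathbf{[S]}$ (all factors positive) yields $c_{12}\,c_{21}>c_{11}\,c_{22}$, while multiplying those in $\mathbf{[W]}$ gives $c_{12}\,c_{21}<c_{11}\,c_{22}$. In case (i) I would then invoke the arithmetic–geometric mean inequality, $(\alpha\,c_{12}+\beta\,c_{21})^2\ge 4\,\alpha\,\beta\,c_{12}\,c_{21}>4\,\alpha\,\beta\,c_{11}\,c_{22}$, which forces $\delta>0$ for every admissible $\alpha,\beta>0$; hence the curve is invariably a hyperbola. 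For case (ii) I would view $\delta$, after division by $\beta^2$, as a quadratic in $t=\alpha/\beta$ with positive leading coefficient $c_{12}^2$; its own discriminant equals $16\,c_{11}\,c_{22}\,(c_{11}\,c_{22}-c_{12}\,c_{21})$, which is strictly positive under $\mathbf{[W]}$, so this quadratic has two positive roots. Selecting $t$ outside, at, or between those roots makes $\delta$ positive, zero, or negative respectively, which is precisely the claim that a hyperbola, a parabola, or an ellipse occurs according to the parameters.

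The step I expect to be the real obstacle is confirming non-degeneracy, since the sign of $\delta$ alone fixes only the projective type and not whether $F=0$ collapses to a pair of lines, a single point, or the empty set. Here I would exploit that $F(0,0)=0$, $F(\mathbf{e}_2)=0$, and $F(\mathbf{e}_3)=0$, so the curve passes through the three non-collinear points $(0,0)$, $(\sigma_1/c_{11},0)$, $(0,\sigma_2/c_{22})$. In the parabolic and elliptic subcases this is immediate: three non-collinear points cannot lie on a (possibly doubled or parallel) line, nor on a point or the empty set, so degeneracy is excluded at once. The hyperbolic case is the only delicate one, the sole degenerate alternative being two intersecting lines; since $F(u,0)=\alpha\,u\,(\sigma_1-c_{11}\,u)$ and $F(0,v)=\beta\,v\,(\sigma_2-c_{22}\,v)$ are not identically zero, neither coordinate axis is a component, which reduces the possibilities to the line through $\mathbf{e}_2$ and $\mathbf{e}_3$ together with a line through the origin. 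Matching coefficients in that putative factorization produces a relation of the form $\alpha\,(c_{22}\sigma_1-c_{12}\sigma_2)/\sigma_2+\beta\,(c_{11}\sigma_2-c_{21}\sigma_1)/\sigma_1=0$ whose two bracketed quantities are strictly of one sign under either $\mathbf{[S]}$ or $\mathbf{[W]}$, so the relation cannot hold for positive $\alpha,\beta$; this rules out degeneracy and completes the identification of a genuine hyperbola. I would present this last sign computation explicitly but leave the elementary coefficient-matching as routine verification.
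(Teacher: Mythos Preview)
Your treatment of part~(i) is essentially identical to the paper's: both compute the discriminant $(\alpha c_{12}+\beta c_{21})^2-4\alpha\beta c_{11}c_{22}$, invoke AM--GM, and use that $\mathbf{[S]}$ forces $c_{12}c_{21}>c_{11}c_{22}$.

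Where you diverge is in part~(ii) and in the non-degeneracy discussion. The paper handles~(ii) purely by example: it fixes $\sigma_1=\sigma_2=c_{11}=c_{22}=1$, $c_{12}=\tfrac12$, $c_{21}=\tfrac23$, and exhibits specific choices of $(\alpha,\beta)$ (with accompanying figures) for which $F=0$ is a hyperbola, a parabola, or an ellipse. Your argument is genuinely more complete: by viewing $\delta/\beta^2$ as a quadratic in $t=\alpha/\beta$ and computing its discriminant $16\,c_{11}c_{22}(c_{11}c_{22}-c_{12}c_{21})>0$ under $\mathbf{[W]}$, you show analytically that all three signs of $\delta$ are realized for suitable positive $\alpha,\beta$, for \emph{every} admissible choice of the $c_{ij}$ and $\sigma_i$. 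The paper's example-based proof only establishes the weaker statement that each type occurs for \emph{some} parameter set. Your non-degeneracy analysis (ruling out a line pair via the coefficient relation $\alpha(c_{22}\sigma_1-c_{12}\sigma_2)/\sigma_2+\beta(c_{11}\sigma_2-c_{21}\sigma_1)/\sigma_1=0$, whose terms share a strict sign under either $\mathbf{[S]}$ or $\mathbf{[W]}$) is entirely absent from the paper, which tacitly identifies ``discriminant positive'' with ``hyperbola''; your version is the more careful one.
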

\begin{proof}
To prove $(i)$, we calculate the discriminant $\mathcal{D}$ of the quadratic curve $F(u,v)=\alpha\,u\,(1-u-a_1\,v)+\beta\,k\,v\,(1-a_2\,u-v)=0$ to obtain
\begin{equation}
\mathcal{D}=\left(\alpha\,c_{12}+\beta\,c_{21}\right)^2-4 \,\alpha\,\beta\, c_{11}\,c_{22}.
\end{equation}
Because of the assumption $\frac{\sigma_1}{c_{11}}>\frac{\sigma_2}{c_{21}}$ and $\frac{\sigma_2}{c_{22}}>\frac{\sigma_1}{c_{12}}$, it follows that $\left(\alpha\,c_{12}+\beta\,c_{21}\right)^2\geq 4\,\alpha\,\beta\,c_{12}\,c_{21}>4\,\alpha\,\beta\,(c_{22}\,\sigma_1\,\sigma_2^{-1})\,(c_{11}\,\sigma_1^{-1}\,\sigma_2)=4 \,\alpha\,\beta\, c_{11}\,c_{22}$. Thus $\mathcal{D}>0$ and the quadratic curve $F(u,v)=0$ is a hyperbola under the assumption $\frac{\sigma_1}{c_{11}}>\frac{\sigma_2}{c_{21}}$ and $\frac{\sigma_2}{c_{22}}>\frac{\sigma_1}{c_{12}}$.

For simplicity we let $\sigma_1=\sigma_2=c_{11}=c_{22}=1$, $c_{12}=\frac{1}{2}$, and $c_{21}=\frac{2}{3}$ to show $(ii)$. Depending on the other parameters, it is shown in Fig~\ref{fig: 6 figures of F(u,v)=0} that $F(u,v)=0$ represents a hyperbola, a parabola, or an ellipse in the $uv$-plane.

\begin{figure}[ht!]
\centering
\mbox{
\subfigure[]{\includegraphics[width=0.395\textwidth]{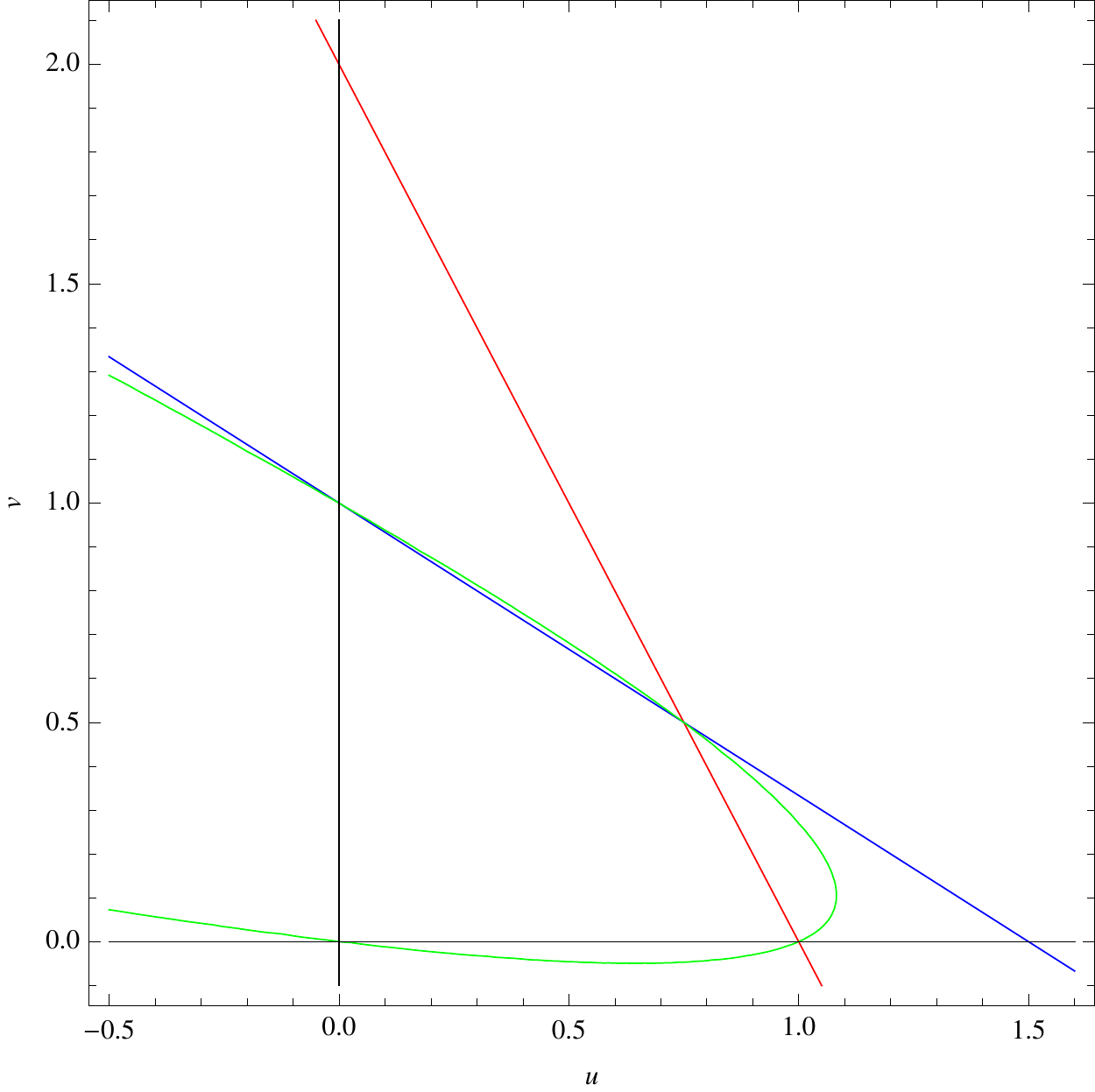}
             \label{fig: hyperbola_3_S}    } \quad \hspace{0mm}
\subfigure[]{\includegraphics[width=0.395\textwidth]{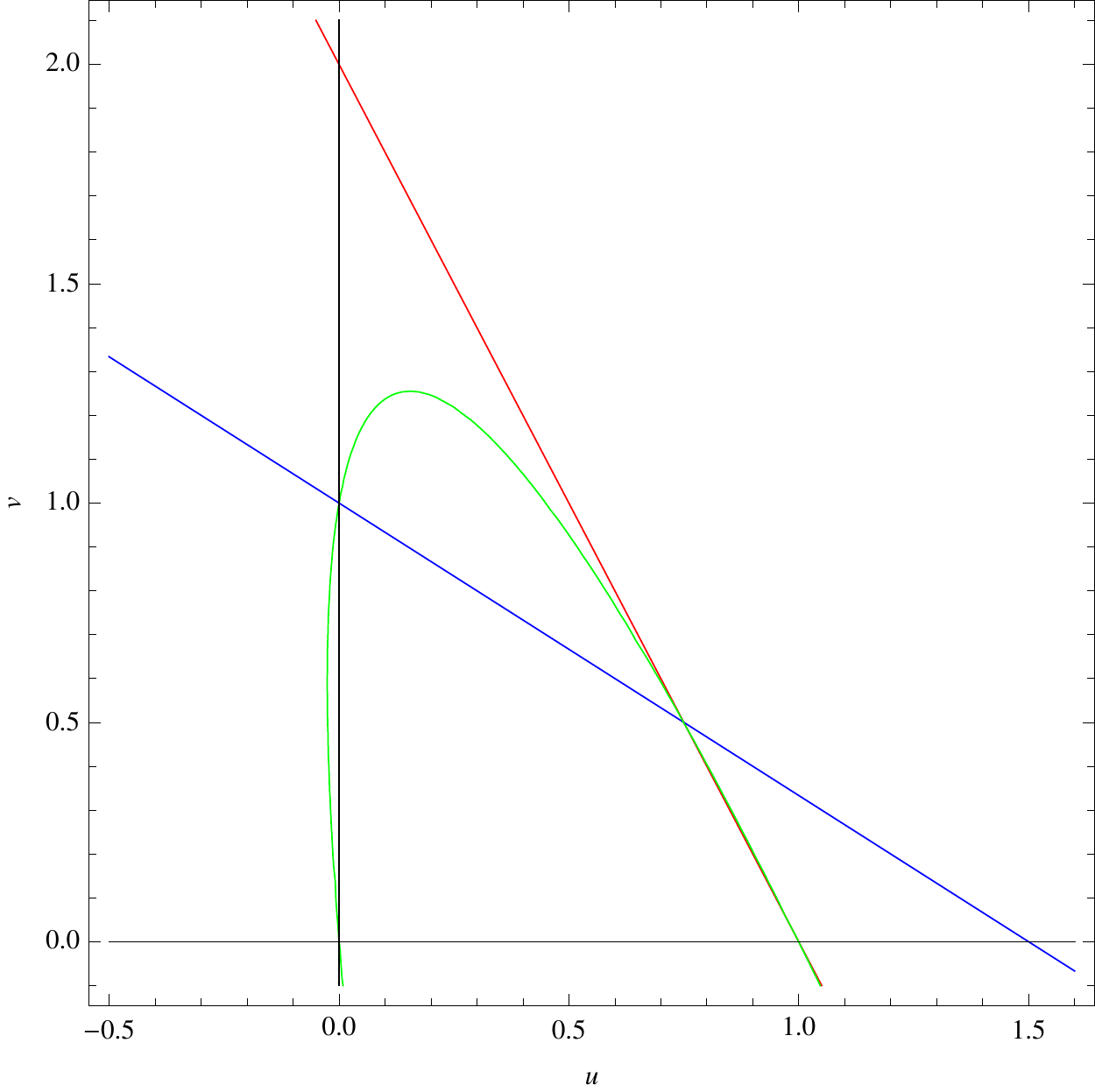}
             \label{fig: hyperbola_2}    } \quad \hspace{0mm}
             }
\mbox{
\subfigure[]{\includegraphics[width=0.395\textwidth]{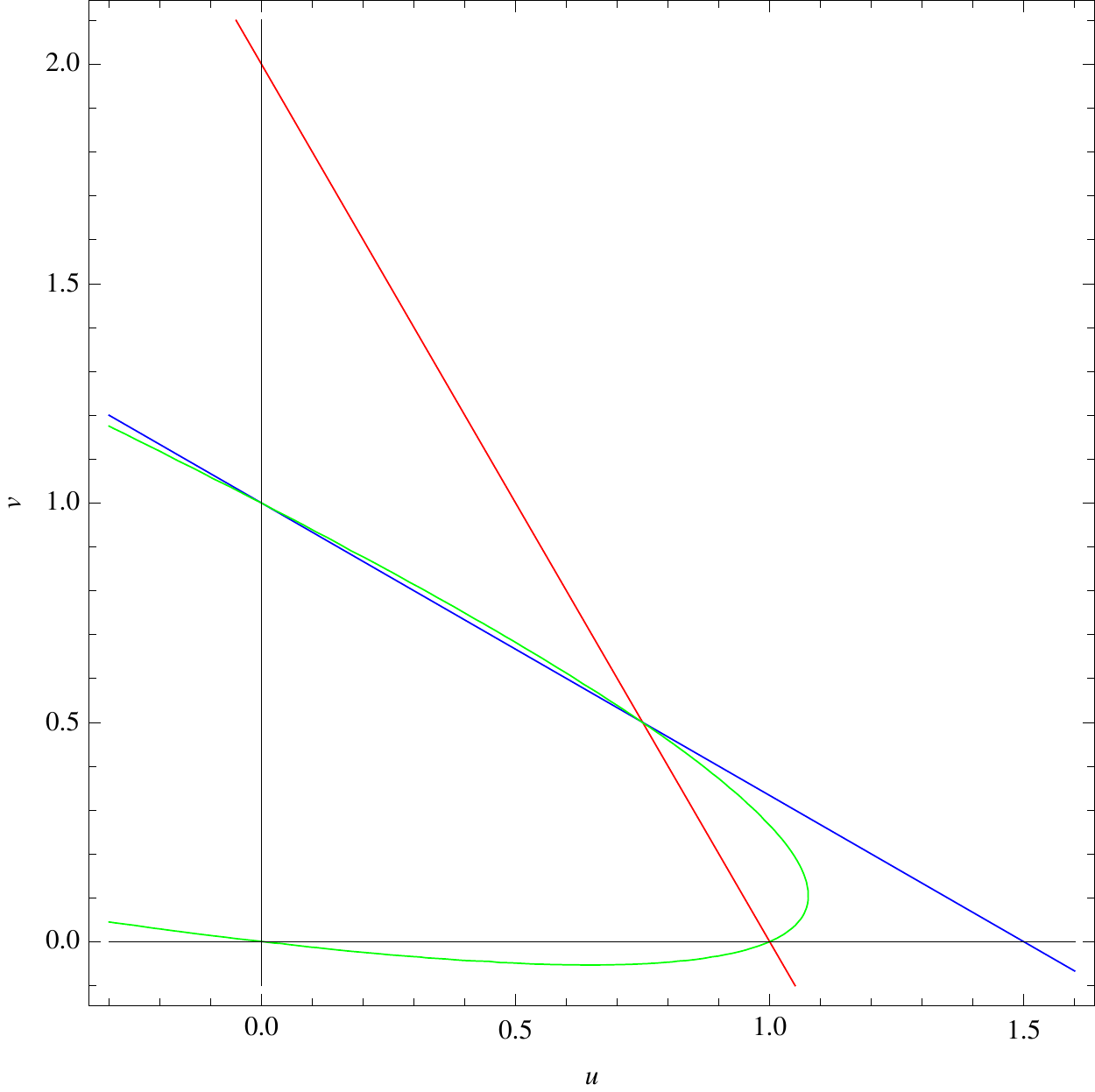}
             \label{fig: parabola_1}    } \quad \hspace{0mm}
\subfigure[]{\includegraphics[width=0.395\textwidth]{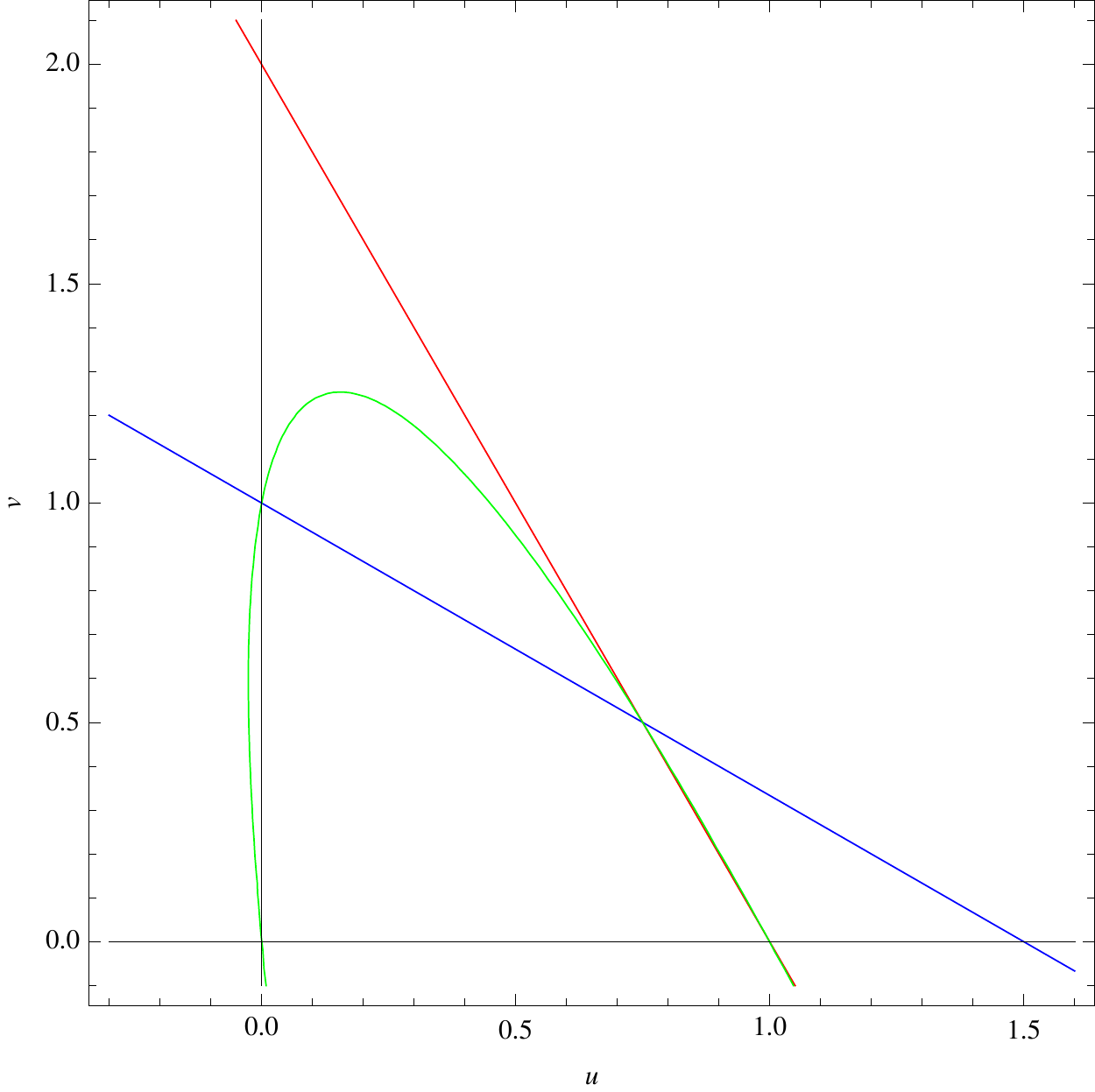}
             \label{fig: parabola_2}    } \quad \hspace{0mm}
}
\mbox{
\subfigure[]{\includegraphics[width=0.395\textwidth]{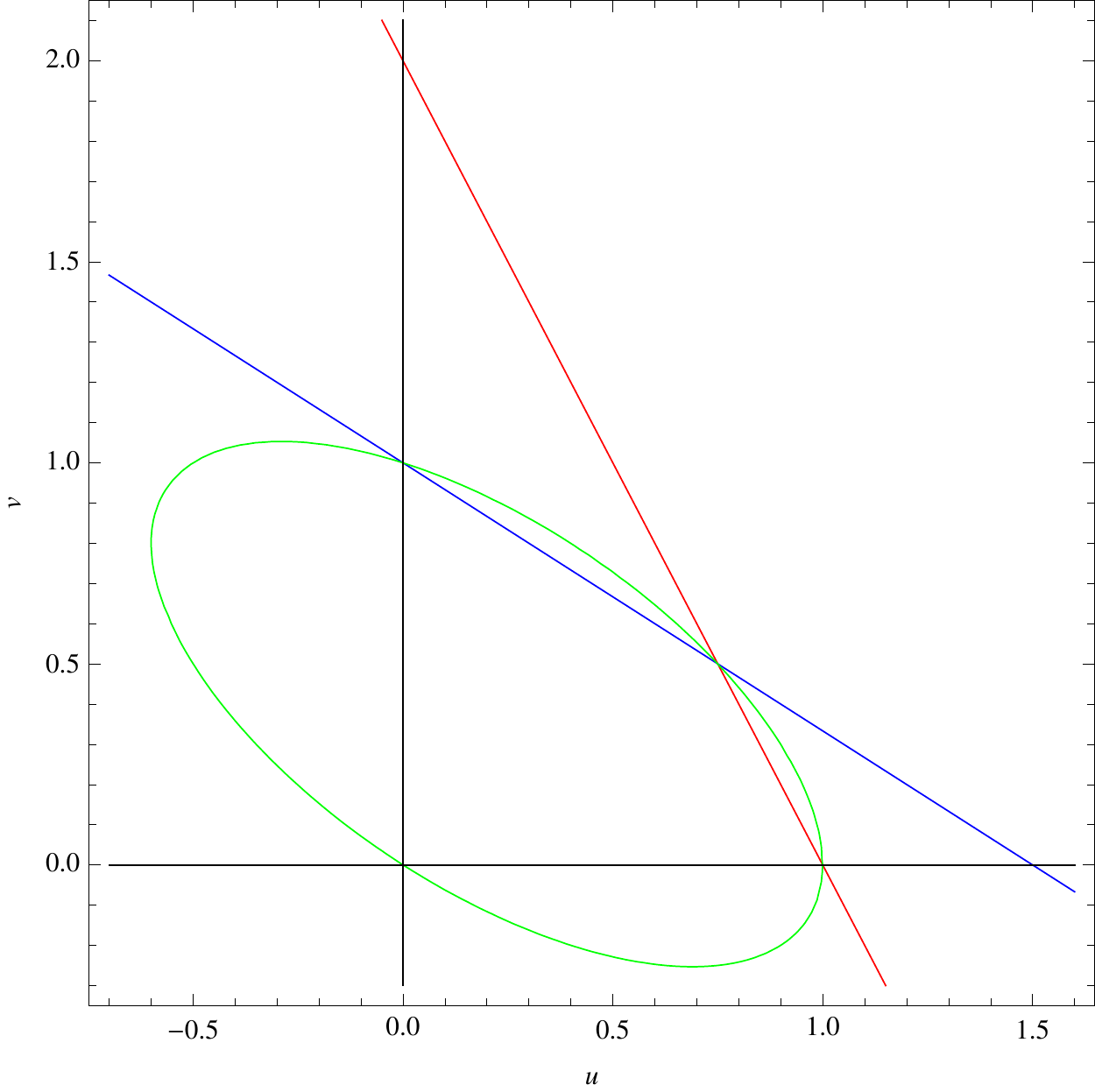}
             \label{fig: ellipse}    } \quad \hspace{0mm}
\subfigure[]{\includegraphics[width=0.395\textwidth]{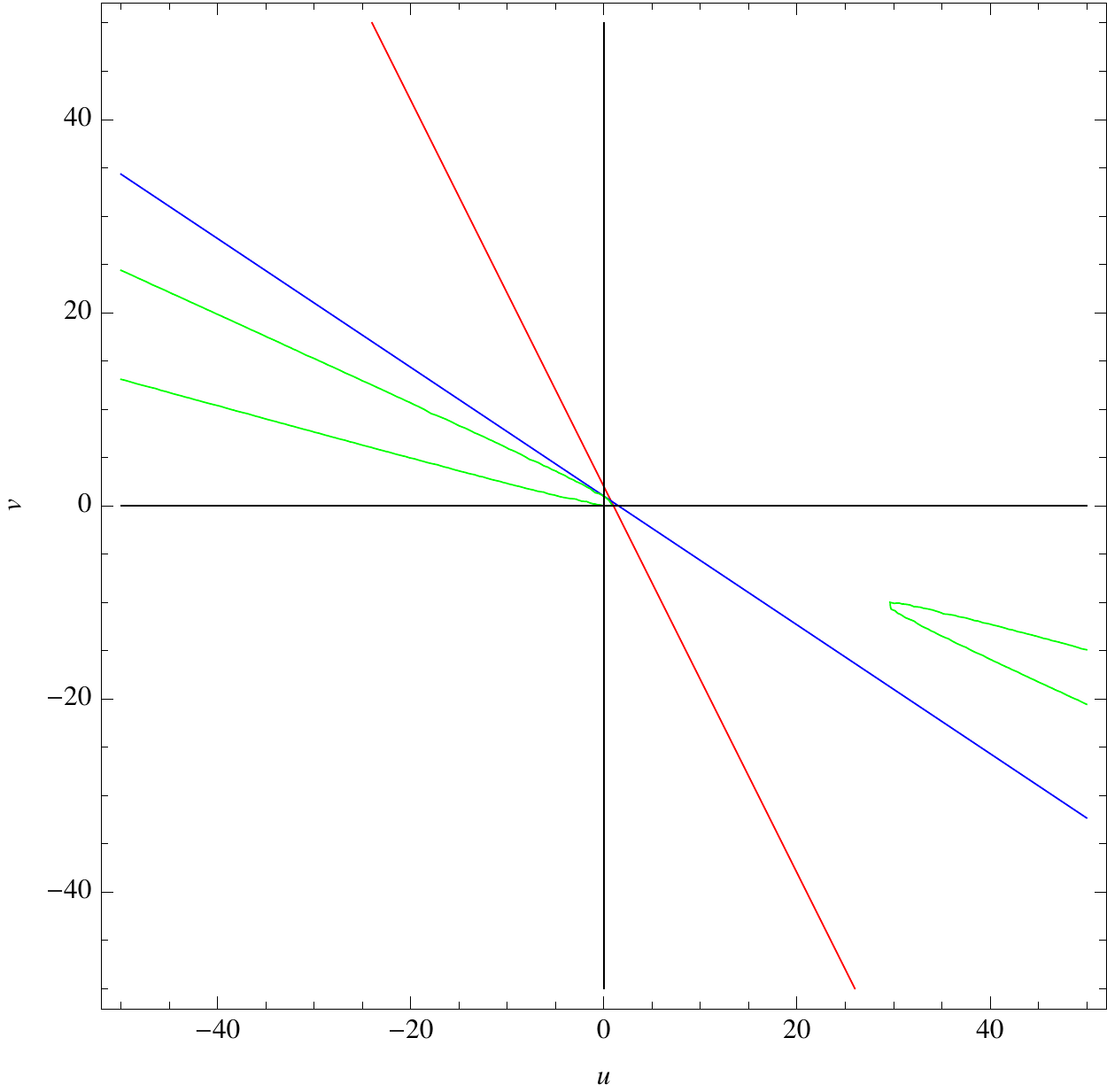}
             \label{fig: hyperbola_3_L_2}    } \quad \hspace{0mm}
}
\caption{\small Red line: $\sigma_1-c_{11}\,u-c_{12}\,v=0$; blue line: $\sigma_2-c_{21}\,u-c_{22}\,v=0$; green curve: $\alpha\,u\,(\sigma_1-c_{11}\,u-c_{12}\,v)+\beta\,v\,(\sigma_2-c_{21}\,u-c_{22}\,v)=0$. $\sigma_1=\sigma_2=c_{11}=c_{22}=1,c_{12}=\frac{1}{2},c_{21}=\frac{2}{3}$.
\subref{fig: hyperbola_3_S} $\alpha=\frac{1}{2}$, $\beta=4$ (hyperbola).
\subref{fig: hyperbola_2} $\alpha=2$, $\beta=\frac{3}{20}$ (hyperbola).
\subref{fig: parabola_1} $\alpha=2$, $\beta=\frac{15}{2}+3 \sqrt{6}\approx14.8485$ (parabola). 
\subref{fig: parabola_2} $\alpha=2$, $\beta=\frac{15}{2}-3 \sqrt{6}\approx0.1515$ (parabola).
\subref{fig: ellipse} $\alpha=2$, $\beta=3$ (ellipse).
\subref{fig: hyperbola_3_L_2} zooming out of (a).
\label{fig: 6 figures of F(u,v)=0}
}
\end{figure}

\end{proof}


\vspace{5mm}

\begin{proof}[Proof of Theorem~\ref{thm: lb<q(x)<ub}]
An easy observation shows that, it suffices to prove for any $\alpha, \beta>0$, we have
\begin{equation}
q_{\ast}
\leq \alpha\,d_1\,u(x)+\beta\,d_2\,v(x)\leq 
q^{\ast}, \quad x\in\mathbb{R},
\end{equation}
where 
\begin{equation}\label{eqn: lower bed of q}
q_{\ast}=\min\bigg[\alpha\,d_1\,\min\Big[\frac{\sigma_1}{c_{11}},\frac{\sigma_2}{c_{21}}\Big],\beta\,d_2\,\min\Big[\frac{\sigma_2}{c_{22}},\frac{\sigma_1}{c_{12}}\Big]\bigg]\min\Big[\frac{d_1}{d_2},\frac{d_2}{d_1}\Big]
\end{equation}
and
\begin{equation}\label{eqn: upper bed of q}
q^{\ast}=\max\bigg[\alpha\,d_1\,\max\Big[\frac{\sigma_1}{c_{11}},\frac{\sigma_2}{c_{21}}\Big],\beta\,d_2\,\max\Big[\frac{\sigma_2}{c_{22}},\frac{\sigma_1}{c_{12}}\Big]\bigg]\max\Big[\frac{d_1}{d_2},\frac{d_2}{d_1}\Big].
\end{equation}

First of all, we prove $(I)$ for the case of strong competition $\mathbf{[S]}$: $\frac{\sigma_1}{c_{11}}>\frac{\sigma_2}{c_{21}}$ and $\frac{\sigma_2}{c_{22}}>\frac{\sigma_1}{c_{12}}$. Clearly, \eqref{eqn: lower bed of q} in this case gives
\begin{equation}\label{eqn: lower bound of q for strong competition}
q(x)\geq \min\bigg[\alpha\,d_1\,\frac{\sigma_2}{c_{21}},\beta\,d_2\,\frac{\sigma_1}{c_{12}}\bigg]\min\Big[\frac{d_1}{d_2},\frac{d_2}{d_1}\Big], \quad x\in \mathbb{R}.
\end{equation} 
The proof of the above inequality is divided into the following four cases:
\begin{itemize}
     \item If $d_2\geq d_1$,
\begin{itemize}
\item [$(i)$] when $\beta\,\sigma_1\,c_{21}\,d_2\geq \alpha\,\sigma_2\,c_{12}\,d_1$ and $(\lambda_1,\lambda_2,\eta)=\big(\frac{\alpha\,\sigma_2\,d_1^2}{c_{21}\,d_2},\frac{\alpha\,\sigma_2\,d_1}{c_{21}},\frac{\alpha\,\sigma_2\,d_1}{c_{21}\,d_2}\big)$, $q(x)\geq \lambda_1$, $x\in \mathbb{R}$;

\item [$(ii)$] when $\beta\,\sigma_1\,c_{21}\,d_2< \alpha\,\sigma_2\,c_{12}\,d_1$ and $(\lambda_1,\lambda_2,\eta)=\big(\frac{\beta\,\sigma_1\,d_1}{c_{12}},\frac{\beta\,\sigma_1\,d_2}{c_{12}},\frac{\beta\,\sigma_1}{c_{12}}\big)$, $q(x)\geq \lambda_1$, $x\in \mathbb{R}$.
\end{itemize}
      \item If $d_2<d_1$,
\begin{itemize}
\item [$(iii)$] when $\beta\,\sigma_1\,c_{21}\,d_2\geq \alpha\,\sigma_2\,c_{12}\,d_1$ and $(\lambda_1,\lambda_2,\eta)=\big(\frac{\alpha\,\sigma_2\,d_2}{c_{21}},\frac{\alpha\,\sigma_2\,d_1}{c_{21}},\frac{\alpha\,\sigma_2}{c_{21}}\big)$, $q(x)\geq \lambda_1$, $x\in \mathbb{R}$;

\item [$(iv)$] when $\beta\,\sigma_1\,c_{21}\,d_2< \alpha\,\sigma_2\,c_{12}\,d_1$ and $(\lambda_1,\lambda_2,\eta)=\big(\frac{\beta\,\sigma_1\,d_2^2}{c_{12}\,d_1},\frac{\beta\,\sigma_1\,d_2}{c_{12}},\frac{\beta\,\sigma_1\,d_2}{c_{12}\,d_1}\big)$, $q(x)\geq \lambda_1$, $x\in \mathbb{R}$.
\end{itemize}

\end{itemize}
We first observe that the four cases can be reduced to the following two cases:
\begin{itemize}
  \item for $\beta\,\sigma_1\,c_{21}\,d_2\geq \alpha\,\sigma_2\,c_{12}\,d_1$, $q(x)\geq  \frac{\alpha\,\sigma_2\,d_1}{c_{21}}\min\big[\frac{d_1}{d_2},\frac{d_2}{d_1}\big]$, $x\in \mathbb{R}$;
  \item for $\beta\,\sigma_1\,c_{21}\,d_2< \alpha\,\sigma_2\,c_{12}\,d_1$, $q(x)\geq \frac{\beta\,\sigma_1\,d_2}{c_{12}}\min\big[\frac{d_1}{d_2},\frac{d_2}{d_1}\big]$, $x\in \mathbb{R}$.
\end{itemize}
Combining the two cases above leads to
\begin{equation}
q(x)\geq \min\bigg[\alpha\,d_1\,\frac{\sigma_2}{c_{21}},\beta\,d_2\,\frac{\sigma_1}{c_{12}}\bigg]\min\Big[\frac{d_1}{d_2},\frac{d_2}{d_1}\Big], \quad x\in \mathbb{R},
\end{equation} 
which is the desired result. The two inequalities in \eqref{eqn: L-V <0} give
\begin{equation}\label{eqn: ODE for p and q<}
q''(x)+\theta\,p'(x)+F(u(x),v(x))\leq0, \quad x\in \mathbb{R}.
\end{equation}
For $d_2>d_1$, we prove $(i)$ by contradiction. Suppose that, contrary to our claim, there exists $z\in\mathbb{R}$ such that $q(z)<\lambda_1$ and $\min_{x\in\mathbb{R}} q(x)=q(z)$. 
Since $u,v\in C^2(\mathbb{R})$, we denote respectively by $z_2$ and $z_1$ the first point intersecting the line $\alpha\,d_1\,u+\beta\,d_2\,v=\lambda_2$ in the $uv$-plane,  when the solution $(u(x),v(x))$ in the $uv$-plane travels from $z$ towards $\infty$ and $-\infty$ (as shown in Figure~3.\ref{fig: d>1_beta_a_2d>alpha_a1}). For the case where $\theta\leq0$, we integrate \eqref{eqn: ODE for p and q<} with respect to $x$ from $z_1$ to $z$ and obtain
\begin{equation}\label{eqn: integrating eqn}
q'(z)-q'(z_1)+\theta\,(p(z)-p(z_1))+\int_{z_1}^{z}F(u(x),v(x))\,dx\leq0.
\end{equation}
On the other hand we conclude: 
\begin{itemize}
  \item since $\min_{x\in\mathbb{R}} q(x)=q(z)$, 
  it is easy to see $q'(z)=\alpha\,d_1\,u'(z)+\beta\,d_2\,v'(z)=0$;
  \item $q(z_1)=\lambda_2$ follows from the fact that $z_1$ is on the line $\alpha\,d_1\,u+\beta\,d_2\,v=\lambda_2$. On the other hand, when $z_1$ moves a little towards $\infty$, $q(z_1+)$ is below the line $\alpha\,d_1\,u+\beta\,d_2\,v=\lambda_2$. This leads to $q(z_1+\delta)\leq \lambda_2$ for any $\delta>0$, and hence $q'(z_1)\leq 0$;
  \item $p(z)<\eta$ since $z$ is below the line $\alpha\,u+\beta\,v=\eta$; $p(z_1)>\eta$ since $z$ is above the line $\alpha\,u+\beta\,v=\eta$;
  \item it is readily seen that the quadratic curve $F(u,v)=0$ passes through the points $(0,0)$, $\big(\frac{\sigma_1}{c_{11}},0\big)$, $\big(0,\frac{\sigma_2}{c_{22}}\big)$, and $(u^{\ast},v^{\ast})$ in the $uv$-plane. Applying Lemma~\ref{lem: quadratic curve of F(u,v)=0}, it follows that $F(u,v)=0$ is a hyperbola and $\int_{z_1}^{z}F(u(x),v(x))\,dx>0$ since $F(u,v)<0$ as $u,v>0$ are sufficiently large.
\end{itemize}
Summarizing the above arguments, we obtain
\begin{equation}
q'(z)-q'(z_1)+\theta\,(p(z)-p(z_1))+\int_{z_1}^{z}F(u(x),v(x))\,dx>0,
\end{equation}
which contradicts \eqref{eqn: integrating eqn}. Therefore when $\theta\leq0$, $q(x)\geq \lambda_1$ for $x\in \mathbb{R}$. For the case where $\theta\geq0$, integrating \eqref{eqn: ODE for p and q<} with respect to $x$ from $z$ to $z_2$ yields
\begin{equation}\label{eqn: eqn by integrate from z to z2}
q'(z_2)-q'(z)+\theta\,(p(z_2)-p(z))+\int_{z}^{z_2}F(u(x),v(x))\,dx\leq0.
\end{equation}
In a similar manner, it can be shown that $q'(z_2)>0$, $q'(z)=0$, $p(z_2)>\eta$, $p(z)<\eta$, and $\int_{z}^{z_2}F(u(x),v(x))\,dx>0$. These together contradict \eqref{eqn: eqn by integrate from z to z2}. Consequently, $(i)$ is proved for $d_2>d_1$. 
For $d_1=d_2$, we have $q(x)=p(x)$ and \eqref{eqn: ODE for p and q<} becomes
\begin{equation}\label{eqn: ODE for p and q p=q}
p''(x)+\theta\,p'(x)+F(u(x),v(x))\leq0, \quad x\in \mathbb{R}.
\end{equation}
Moreover, when $d_1=d_2$ we take $\frac{\lambda_1}{d_1}=\frac{\lambda_2}{d_1}=\eta=\frac{\alpha\,\sigma_2}{c_{21}}$, i.e. the three lines $\alpha\,d_1\,u+\beta\,d_2\,v=\lambda_1$, $\alpha\,d_1\,u+\beta\,d_2\,v=\lambda_2$, and $\alpha\,u+\beta\,v=\eta$ coincide. Analogously to the case of $d_2>d_1$, we assume that there exists $\hat{z}\in\mathbb{R}$ such that $p(\hat{z})<\lambda_1$ and $\min_{x\in\mathbb{R}} p(x)=p(\hat{z})$. Due to $\min_{x\in\mathbb{R}} p(x)=p(\hat{z})$, we have $p'(\hat{z})=0$ and $p''(\hat{z})\geq0$. By means of Lemma~\ref{lem: quadratic curve of F(u,v)=0}, $F(u(\hat{z}),v(\hat{z}))>0$. These together give $p''(\hat{z})+\theta\,p'(\hat{z})+F(u(\hat{z}),v(\hat{z}))>0$, which contradicts \eqref{eqn: ODE for p and q p=q}. Thus, $p(x)\geq \lambda_1$ for $x\in \mathbb{R}$ when $d_1=d_2$. As a result, the proof of $(i)$ is completed.


Clearly, we see from Figures~3.\ref{fig: d>1_beta_a_2d<alpha_a1}, 3.\ref{fig: d<1_beta_a_2d>alpha_a1}, and 3.\ref{fig: d<1_beta_a_2d<alpha_a1} that the proofs of cases $(ii)$, $(iii)$, and $(iv)$ follow in a similar manner. This completes the proof of the case of strong competition $\mathbf{[S]}$: $\frac{\sigma_1}{c_{11}}>\frac{\sigma_2}{c_{21}}$ and $\frac{\sigma_2}{c_{22}}>\frac{\sigma_1}{c_{12}}$.

\begin{figure}[ht!]
\centering
\mbox{
\subfigure[]{\includegraphics[width=0.44\textwidth]{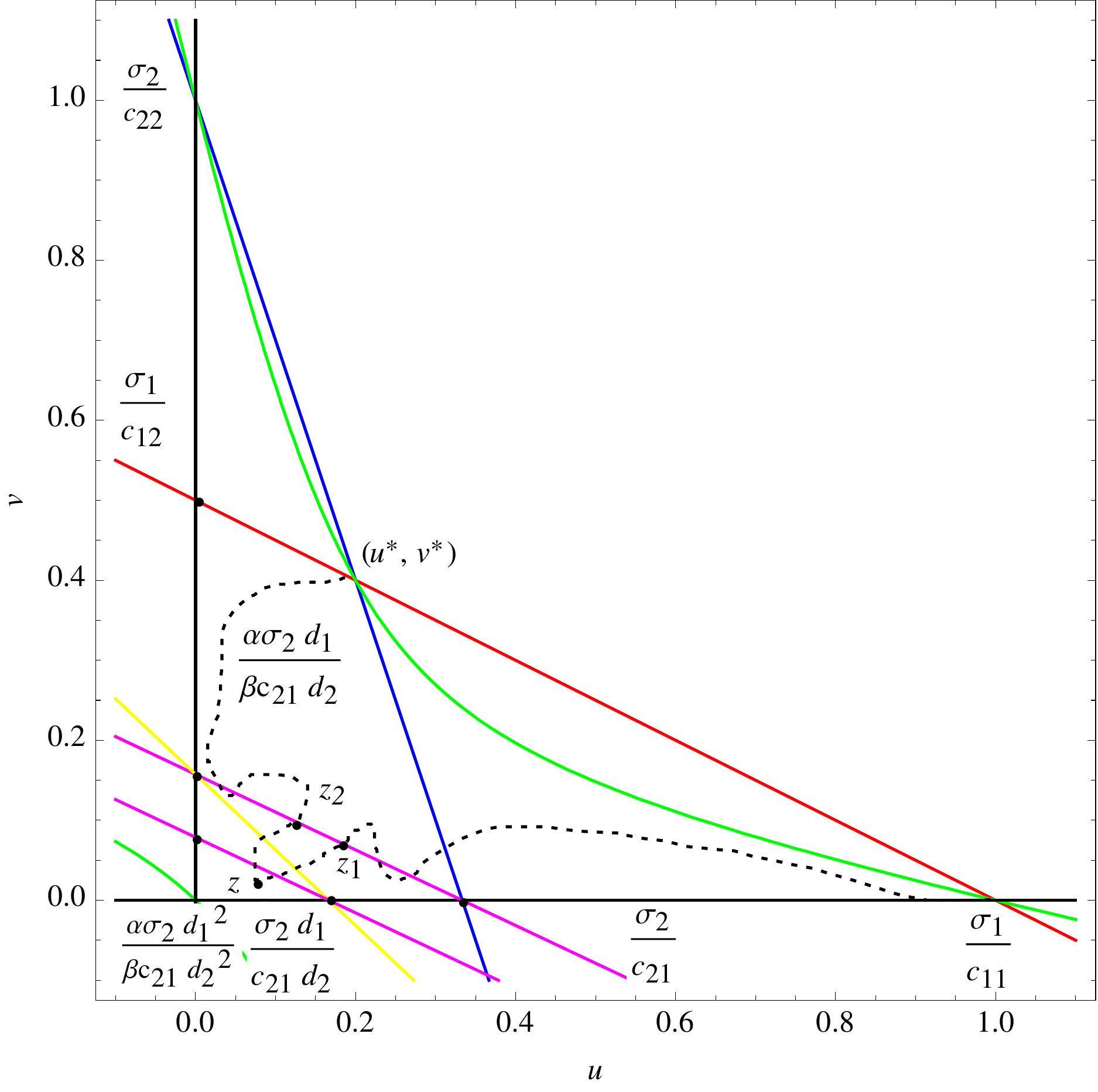}
             \label{fig: d>1_beta_a_2d>alpha_a1}    } \quad \hspace{0mm}
\subfigure[]{\includegraphics[width=0.44\textwidth]{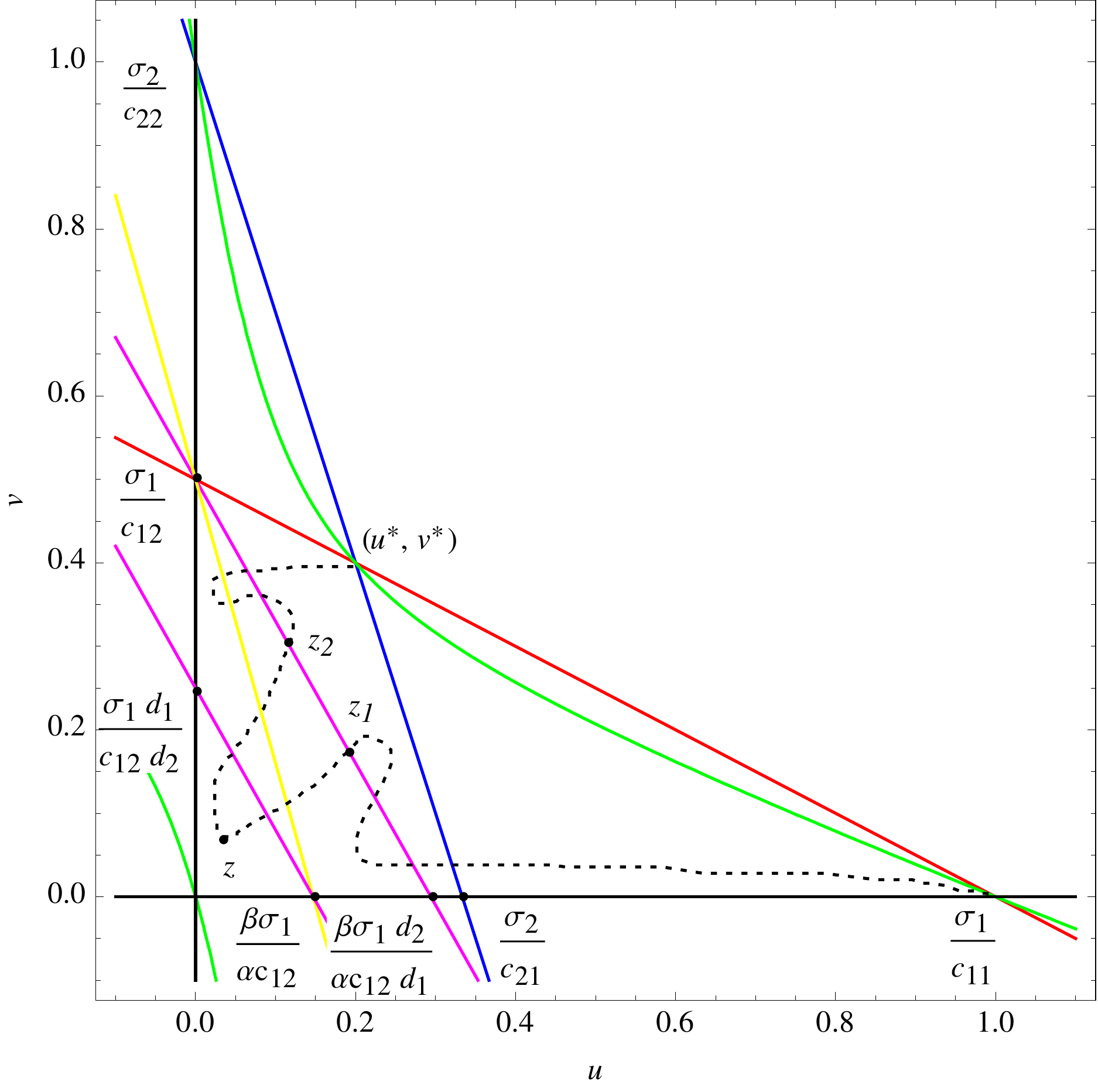}
             \label{fig: d>1_beta_a_2d<alpha_a1}    } \quad \hspace{0mm}
             }
\mbox{
\subfigure[]{\includegraphics[width=0.44\textwidth]{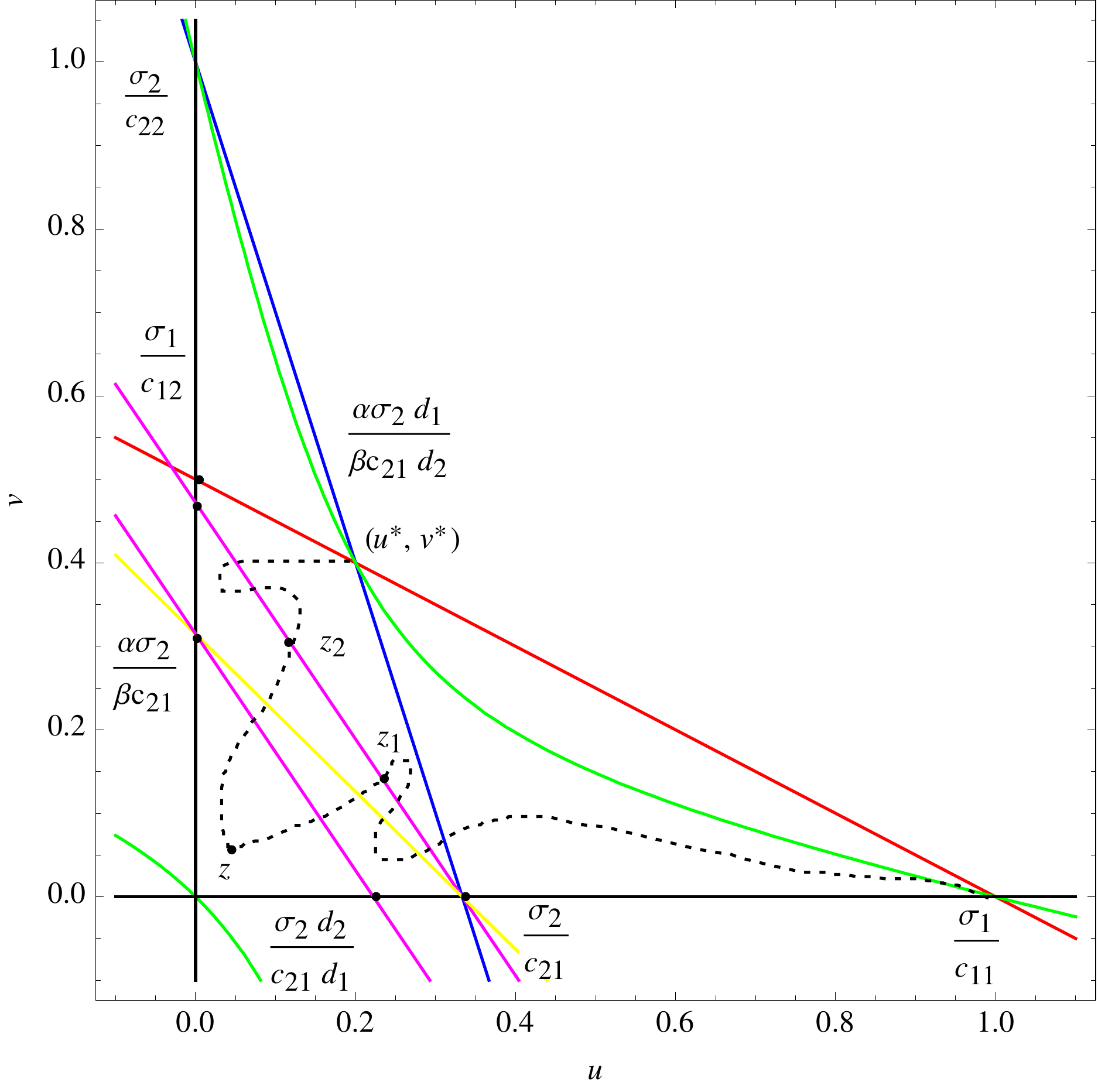}
             \label{fig: d<1_beta_a_2d>alpha_a1}    } \quad \hspace{0mm}
\subfigure[]{\includegraphics[width=0.44\textwidth]{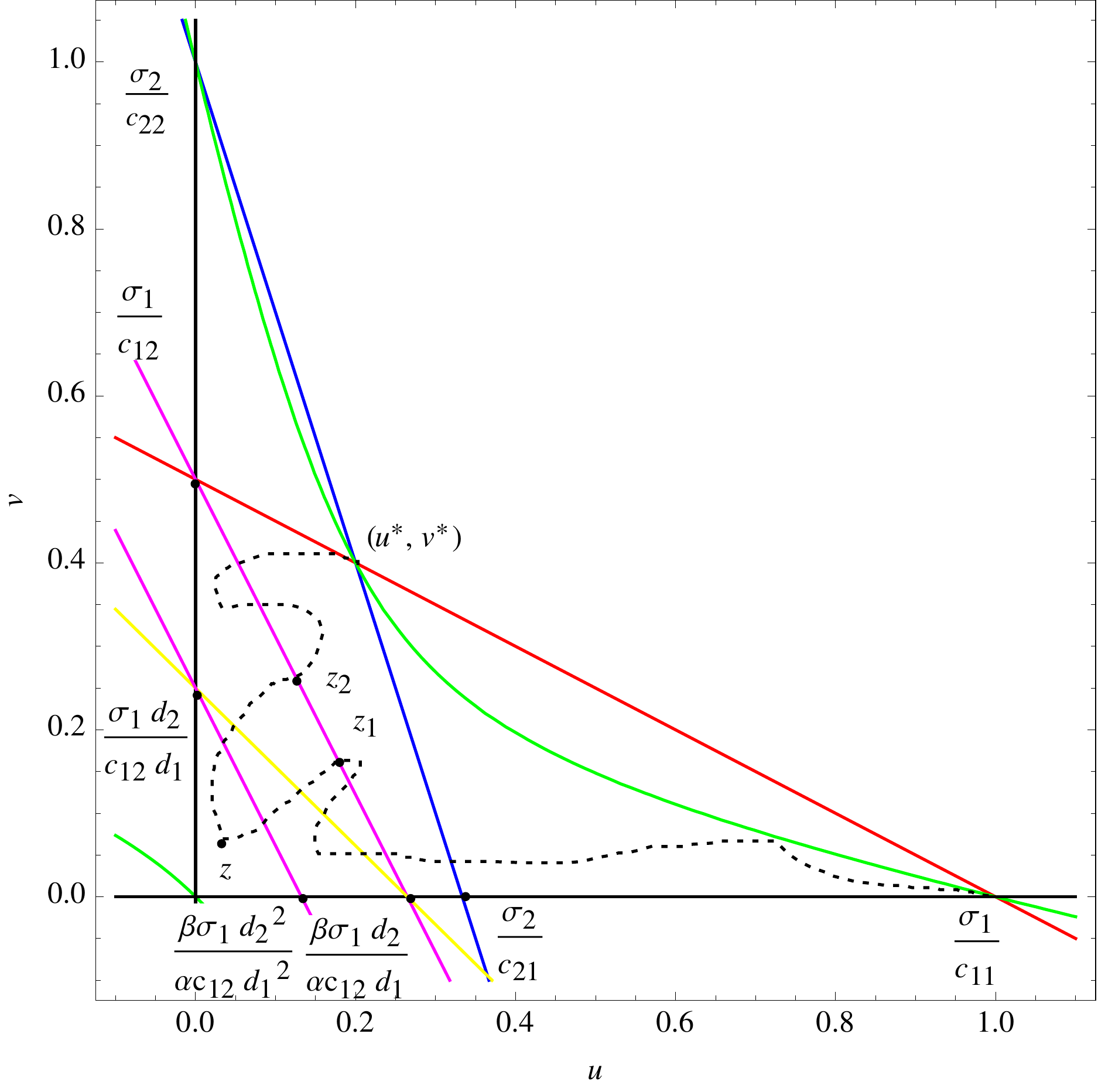}
             \label{fig: d<1_beta_a_2d<alpha_a1}    } \quad \hspace{0mm}
}
\caption{\small Red line: $\sigma_1-c_{11}\,u-c_{12}\,v=0$; blue line: $\sigma_2-c_{21}\,u-c_{22}\,v=0$; green curve: $\alpha\,u\,(\sigma_1-c_{11}\,u-c_{12}\,v)+\beta\,v\,(\sigma_2-c_{21}\,u-c_{22}\,v)=0$; magenta line (above): $\alpha\,d_1\,u+\beta\,d_2\,v=\lambda_2$; magenta line (below): $\alpha\,d_1\,u+\beta\,d_2\,v=\lambda_1$; yellow line: $\alpha\,u+\beta\,v=\eta$; dashed curve: $(u(x),v(x))$. $d_1=\sigma_1=\sigma_2=c_{11}=c_{22}=1$.
\subref{fig: d>1_beta_a_2d>alpha_a1} $c_{12}=2$, $c_{21}=3$, $\alpha=17$, $\beta=18$, and $d_2=2$ give $\lambda_1=\frac{17}{6}$, $\lambda_2=\frac{17}{3}$, and $\eta=\frac{17}{6}$.
\subref{fig: d>1_beta_a_2d<alpha_a1} $c_{12}=2$, $c_{21}=3$, $\alpha=17$, $\beta=5$, and $d_2=2$ give $\lambda_1=\frac{5}{2}$, $\lambda_2=5$, and $\eta=\frac{5}{2}$.
\subref{fig: d<1_beta_a_2d>alpha_a1} $c_{12}=2$, $c_{21}=3$, $\alpha=17$, $\beta=18$, and $d_2=\frac{2}{3}$ give $\lambda_1=\frac{34}{9}$, $\lambda_2=\frac{17}{3}$, and $\eta=\frac{17}{3}$. 
\subref{fig: d<1_beta_a_2d<alpha_a1} $c_{12}=2$, $c_{21}=3$, $\alpha=17$, $\beta=18$, and $d_2=\frac{1}{2}$ give $\lambda_1=\frac{9}{4}$, $\lambda_2=\frac{9}{2}$, and $\eta=\frac{9}{2}$.
\label{fig: 4 figures lower bound}}
\end{figure} 

To give the proof for the case of weak competition $\mathbf{[W]}$: $\frac{\sigma_1}{c_{11}}<\frac{\sigma_2}{c_{21}}$ and $\frac{\sigma_2}{c_{22}}<\frac{\sigma_1}{c_{12}}$, which leads to  
\begin{equation}\label{eqn: lower bound of q for weak competition}
q(x)\geq \min\bigg[\alpha\,d_1\,\frac{\sigma_1}{c_{11}},\beta\,d_2\,\frac{\sigma_2}{c_{22}}\bigg]\min\Big[\frac{d_1}{d_2},\frac{d_2}{d_1}\Big], \quad x\in \mathbb{R}
\end{equation} 
by \eqref{eqn: lower bed of q}, we first see from Lemma~\ref{lem: quadratic curve of F(u,v)=0} that 
$F(u,v)=0$ in this case is a hyperbola (see for example Figure~3.\ref{fig: hyperbola_3_S}, Figure~3.\ref{fig: hyperbola_2}, and Figure~3.\ref{fig: hyperbola_3_L_2}) as in the case of strong competition $\mathbf{[S]}$, a parabola (see for example Figure~3.\ref{fig: parabola_1} and Figure~3.\ref{fig: parabola_2}), or an ellipse (see for example Figure~3.\ref{fig: ellipse}) depending on the parameters in $F(u,v)=0$. However, since we are only concerned with the curve $F(u,v)=0$ in the first quadrant of the $uv$-plane, it is readily seen from Figure~\ref{fig: 6 figures of F(u,v)=0} that for each of the three generic types of quadratic curves, we can construct an \textit{N-barrier} for which the arguments used in proving the case of strong competition $\mathbf{[S]}$ remain valid. Moreover, in addition to the diffusion rates $d_1$, $d_2$ and the coefficients $\alpha$, $\beta$, the lower bound of $q(x)$ given by \eqref{eqn: lower bound of q for strong competition} under $\mathbf{[S]}$ is only involved with the minimal $u$-intercept of the lines $\sigma_1-c_{11}\,u-c_{12}\,v=0$ and $\sigma_2-c_{21}\,u-c_{22}\,v=0$, i.e. $u=\frac{\sigma_2}{c_{21}}$, and the minimal $v$-intercept of the lines $\sigma_1-c_{11}\,u-c_{12}\,v=0$ and $\sigma_2-c_{21}\,u-c_{22}\,v=0$, i.e. $v=\frac{\sigma_1}{c_{12}}$. Accordingly, for the case of weak competition $\mathbf{[W]}$, we conclude that \eqref{eqn: lower bound of q for weak competition} holds
since the minimal $u$-intercept of the lines $\sigma_1-c_{11}\,u-c_{12}\,v=0$ and $\sigma_2-c_{21}\,u-c_{22}\,v=0$ is $u=\frac{\sigma_1}{c_{11}}$, and the minimal $v$-intercept of the lines $\sigma_1-c_{11}\,u-c_{12}\,v=0$ and $\sigma_2-c_{21}\,u-c_{22}\,v=0$ is $v=\frac{\sigma_2}{c_{22}}$, respectively. For the case of strong competition $\mathbf{[S]}$, \eqref{eqn: lower bound of q for strong competition} is proved, whereas for the case of weak competition $\mathbf{[W]}$, we obtain \eqref{eqn: lower bound of q for weak competition}. Combining the two inequalities \eqref{eqn: lower bound of q for strong competition} and \eqref{eqn: lower bound of q for weak competition} yields the lower bound of $q(x)$ given by \eqref{eqn: lower bed of q}. This completes the proof of $(I)$.


As in the proof of $(I)$, there are also four cases for the proof of $(II)$ when the condition of strong competition $\mathbf{[S]}$ holds:

\begin{itemize}
     \item If $d_2\geq d_1$,
\begin{itemize}
\item [$(i)$] when $\beta\,\sigma_2\,c_{11}\,d_2\geq \alpha\,\sigma_1\,c_{22}\,d_1$ and $(\lambda_1,\lambda_2,\eta)=\big(\frac{\beta\,\sigma_2\,d_2^2}{c_{22}\,d_1},\frac{\beta\,\sigma_2\,d_2}{c_{22}},\frac{\sigma_2\,d_2}{c_{22}\,d_1}\big)$, $q(x)\leq \lambda_1$, $x\in \mathbb{R}$;

\item [$(ii)$] when $\beta\,\sigma_2\,c_{11}\,d_2< \alpha\,\sigma_1\,c_{22}\,d_1$ and $(\lambda_1,\lambda_2,\eta)=\big(\frac{\alpha\,\sigma_1\,d_2}{c_{11}},\frac{\alpha\,\sigma_1\,d_1}{c_{11}},\frac{\alpha\,\sigma_1}{c_{11}}\big)$, $q(x)\leq \lambda_1$, $x\in \mathbb{R}$.
\end{itemize}
      \item If $d_2<d_1$,
\begin{itemize}
\item [$(iii)$] when $\beta\,\sigma_2\,c_{11}\,d_2\geq \alpha\,\sigma_1\,c_{22}\,d_1$ and $(\lambda_1,\lambda_2,\eta)=\big(\frac{\beta\,\sigma_2\,d_1}{c_{22}},\frac{\beta\,\sigma_2\,d_2}{c_{22}},\frac{\beta\,\sigma_2}{c_{22}}\big)$, $q(x)\leq \lambda_1$, $x\in \mathbb{R}$;

\item [$(iv)$] when $\beta\,\sigma_2\,c_{11}\,d_2< \alpha\,\sigma_1\,c_{22}\,d_1$ and $(\lambda_1,\lambda_2,\eta)=\big(\frac{\alpha\,\sigma_1\,d_1^2}{c_{11}\,d_2},\frac{\alpha\,\sigma_1\,d_1}{c_{11}},\frac{\alpha\,\sigma_1\,d_1}{c_{11}\,d_2}\big)$, $q(x)\leq \lambda_1$, $x\in \mathbb{R}$.
\end{itemize}

\end{itemize}

Combining the four cases above, it immediately follows that 
\begin{itemize}
  \item for $\beta\,\sigma_2\,c_{11}\,d_2\geq \alpha\,\sigma_1\,c_{22}\,d_1$, $q(x)\leq  \frac{\beta\,\sigma_2\,d_2}{c_{22}}\max\big[\frac{d_1}{d_2},\frac{d_2}{d_1}\big]$ for $x\in \mathbb{R}$;
  \item for $\beta\,\sigma_2\,c_{11}\,d_2< \alpha\,\sigma_1\,c_{22}\,d_1$, $q(x)\leq  \frac{\alpha\,\sigma_1\,d_1}{c_{11}}\max\big[\frac{d_1}{d_2},\frac{d_2}{d_1}\big]$ for $x\in \mathbb{R}$,
\end{itemize}
and hence we have
\begin{equation}\label{eqn: upper bed of q under strong competition}
q(x)\leq \max\bigg[\alpha\,d_1\,\frac{\sigma_1}{c_{11}},\beta\,d_2\,\frac{\sigma_2}{c_{22}}\bigg]\max\Big[\frac{d_1}{d_2},\frac{d_2}{d_1}\Big], \quad x\in \mathbb{R}.
\end{equation} 
Indeed, \eqref{eqn: upper bed of q under strong competition} holds, as is readily seen by employing similar arguments as above together with the N-barriers constructed in Figures~3.\ref{fig: d>1_beta_d>alpha}, 3.\ref{fig: d>1_beta_d<alpha}, 3.\ref{fig: d<1_beta_d>alpha}, and 3.\ref{fig: d<1_beta_d<alpha}. On the other hand, under the condition of weak competition $\mathbf{[W]}$, \eqref{eqn: upper bed of q} leads to
\begin{equation}\label{eqn: upper bed of q under weak competition}
q(x)\leq \max\bigg[\alpha\,d_1\,\frac{\sigma_2}{c_{21}},\beta\,d_2\,\frac{\sigma_1}{c_{12}}\bigg]\max\Big[\frac{d_1}{d_2},\frac{d_2}{d_1}\Big], \quad x\in \mathbb{R},
\end{equation}
which can be shown as in the proof of \eqref{eqn: lower bound of q for weak competition} by interchanging the roles of $\frac{\sigma_1}{c_{11}}$ ($\frac{\sigma_2}{c_{22}}$, respectively) and $\frac{\sigma_2}{c_{21}}$ ($\frac{\sigma_1}{c_{12}}$, respectively) in \eqref{eqn: upper bed of q under strong competition}. Therefore $(II)$ of Theorem~\ref{thm: lb<q(x)<ub} follows from \eqref{eqn: upper bed of q under strong competition} and \eqref{eqn: upper bed of q under weak competition}. The proof of Theorem~\ref{thm: lb<q(x)<ub} is completed.





\begin{figure}[ht!]
\centering
\mbox{
\subfigure[]{\includegraphics[width=0.44\textwidth]{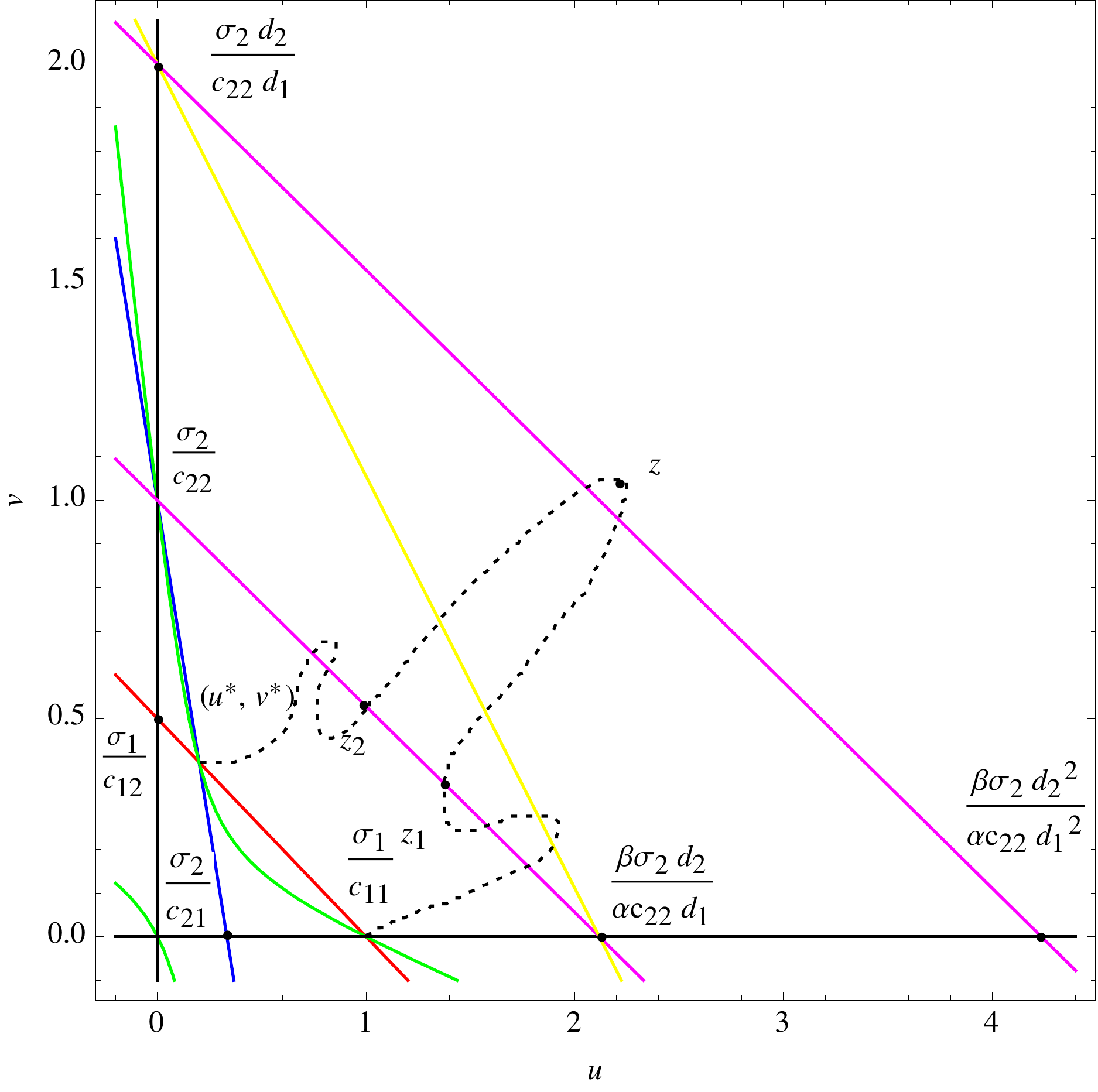}
             \label{fig: d>1_beta_d>alpha}    } \quad \hspace{0mm}
\subfigure[]{\includegraphics[width=0.44\textwidth]{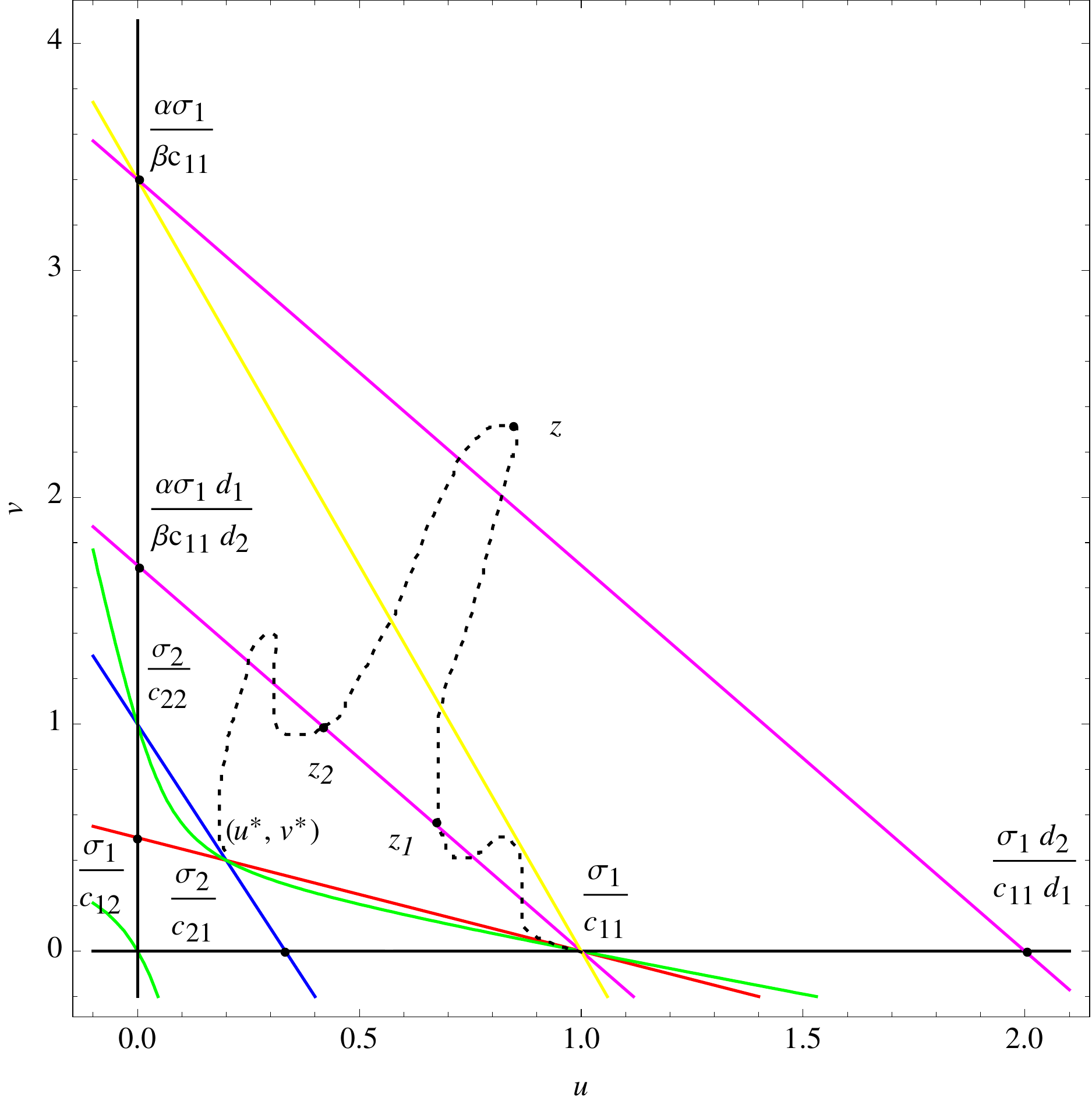}
             \label{fig: d>1_beta_d<alpha}    } \quad \hspace{0mm}
             }
\mbox{
\subfigure[]{\includegraphics[width=0.44\textwidth]{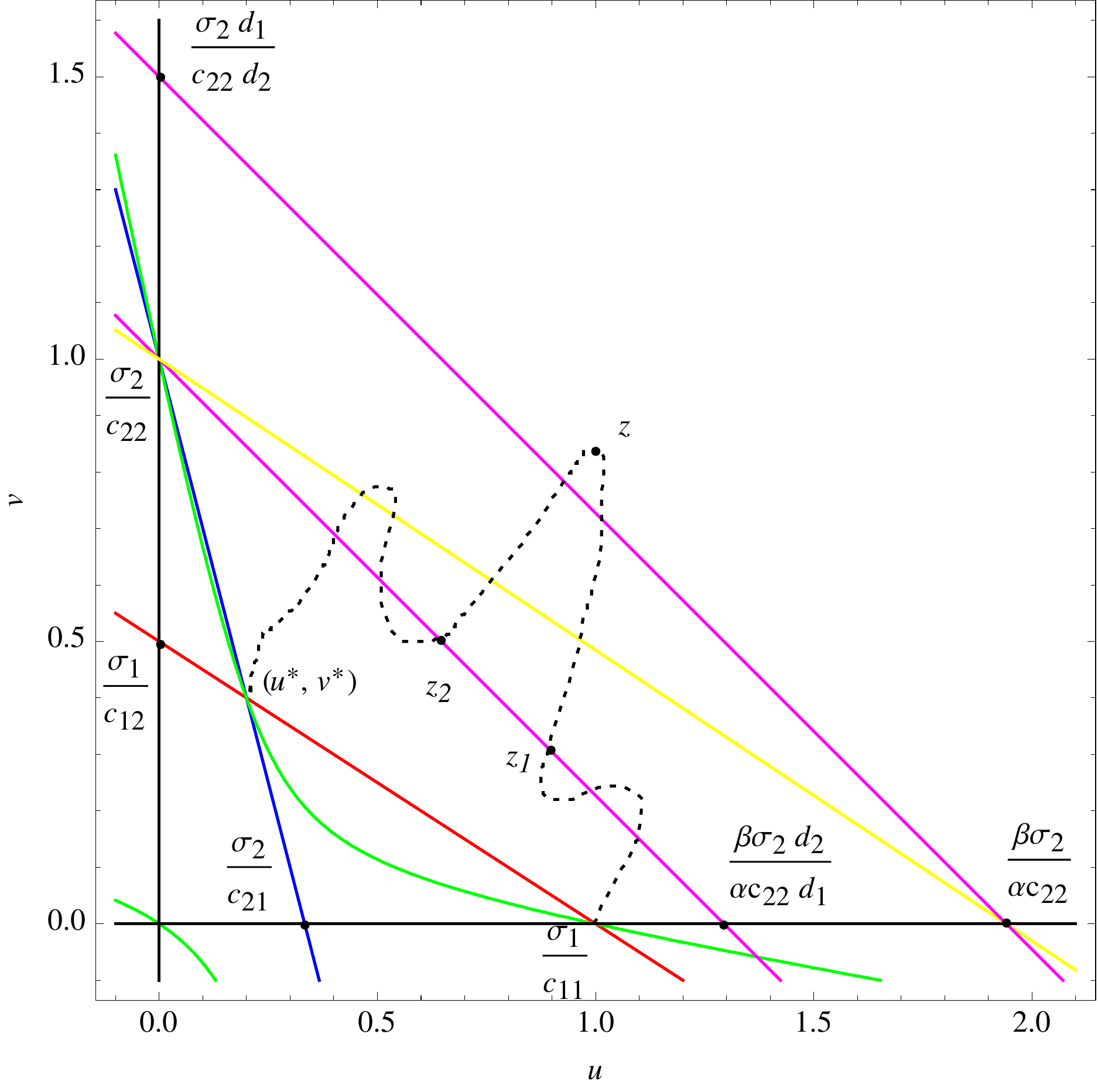}
             \label{fig: d<1_beta_d>alpha}    } \quad \hspace{0mm}
\subfigure[]{\includegraphics[width=0.44\textwidth]{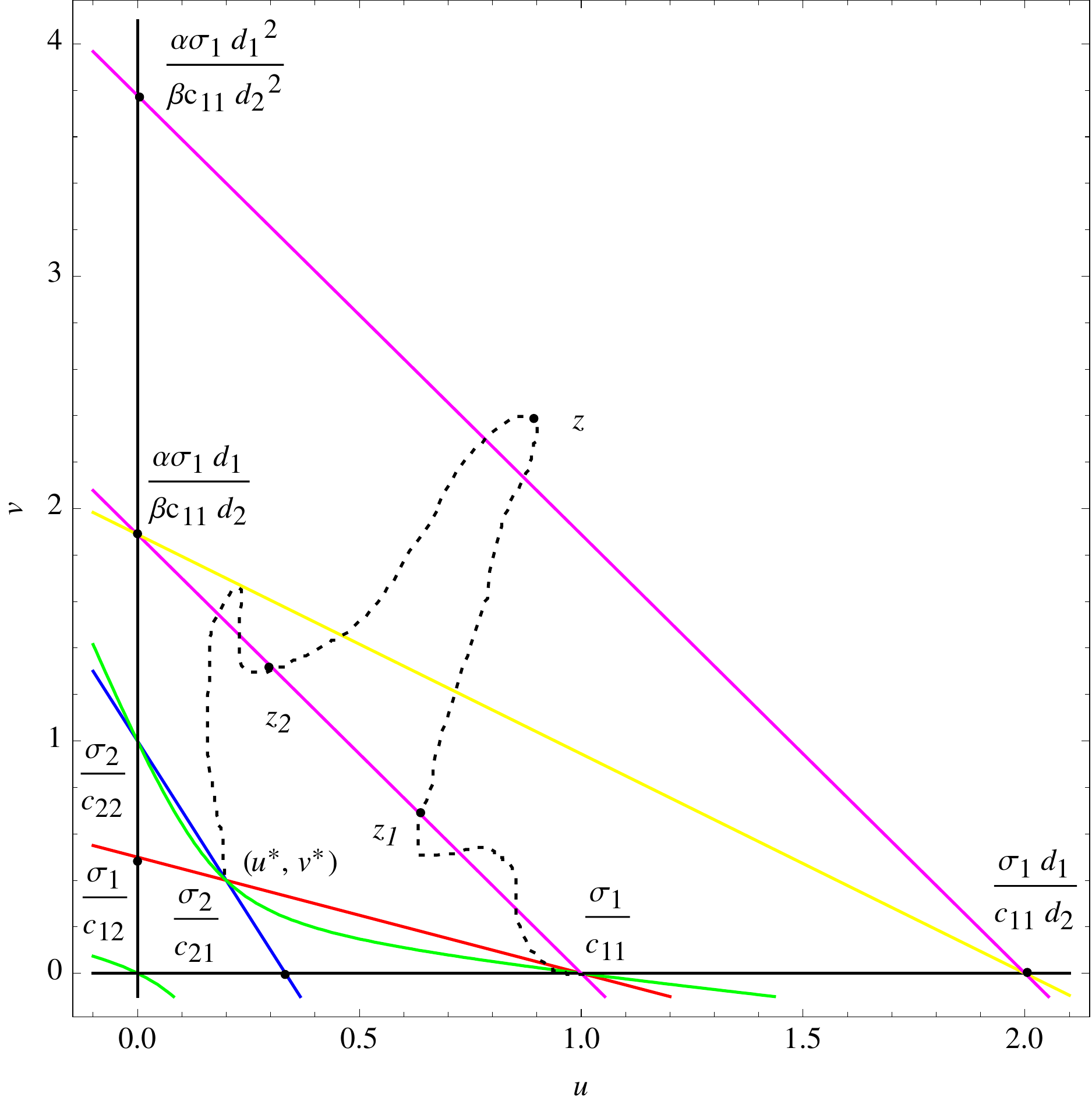}
             \label{fig: d<1_beta_d<alpha}    } \quad \hspace{0mm}
}
\caption{\small Red line: $\sigma_1-c_{11}\,u-c_{12}\,v=0$; blue line: $\sigma_2-c_{21}\,u-c_{22}\,v=0$; green curve: $\alpha\,u\,(\sigma_1-c_{11}\,u-c_{12}\,v)+\beta\,v\,(\sigma_2-c_{21}\,u-c_{22}\,v)=0$; magenta line (below): $\alpha\,d_1\,u+\beta\,d_2\,v=\lambda_2$; magenta line (above): $\alpha\,d_1\,u+\beta\,d_2\,v=\lambda_1$; yellow line: $\alpha\,u+\beta\,v=\eta$; dashed curve: $(u(x),v(x))$. $d_1=\sigma_1=\sigma_2=c_{11}=c_{22}=1$.
\subref{fig: d>1_beta_d>alpha} $c_{12}=2$, $c_{21}=3$, $\alpha=17$, $\beta=18$, and $d_2=2$ give $\lambda_1=72$, $\lambda_2=36$, and $\eta=36$.
\subref{fig: d>1_beta_d<alpha} $c_{12}=2$, $c_{21}=3$, $\alpha=17$, $\beta=5$, and $d_2=2$ give $\lambda_1=34$, $\lambda_{2}=17$, and $\eta=17$.
\subref{fig: d<1_beta_d>alpha} $c_{12}=2$, $c_{21}=3$, $\alpha=17$, $\beta=33$, and $d_2=\frac{2}{3}$ give $\lambda_1=33$, $\lambda_2=22$, and $\eta=33$. 
\subref{fig: d<1_beta_d<alpha} $c_{12}=2$, $c_{21}=3$, $\alpha=17$, $\beta=18$, and $d_2=\frac{1}{2}$ give $\lambda_1=34$, $\lambda_2=17$, and $\eta=34$.
\label{fig: 4 figures upper bound}}
\end{figure}

\end{proof}

\vspace{5mm}
\setcounter{equation}{0}
\setcounter{figure}{0}
\section{New exact $(1,0,0)$-$(u^{\ast},v^{\ast},0)$ waves}\label{sec: exact (1,0,0)-(u,v,0) waves}
\vspace{5mm}




In this section, we always assume $\sigma_1=c_{11}$, unless otherwise stated. Looking for traveling wave solutions $(u(x),v(x),w(x))$ with the profiles of $u(x)$ being decreasing in $x$, $v(x)$ being increasing in $x$, and $w(x)$ being a pulse (i.e., $w(\pm\infty)=0$ and $w(x)>0$ for $x\in \mathbb{R}$) of \eqref{eqn: L-V systems of three species (TWS)} leads to the following ans\"{a}tz (\cite{Chen&Hung15Nonexistence,CHMU-semi,CHMU,hung2012JJIAM}) for solving \eqref{eqn: L-V systems of three species (TWS)}
\begin{equation}\label{eqn: ansatz for solns}
\begin{cases}
\vspace{3mm}u(x) = \frac{\displaystyle1}{\displaystyle 2}(u^{\ast}+1)+\frac{\displaystyle1}{\displaystyle2}(u^{\ast}-1)\,T(x), \\
\vspace{3mm}v(x) = k_1\,(1+ T(x) )^2, \\
w(x) = k_2\,(1- T^2(x)),
\end{cases}
\end{equation}
where $T(x)=\tanh(x)$, $k_1=\frac{\displaystyle v^{\ast}}{\displaystyle 4}$ and $k_2$ is a positive constant to be determined. It is readily verified that the ans\"{a}tz \eqref{eqn: ansatz for solns} satisfies the boundary conditions at $x=\pm\infty$ in \eqref{eqn: L-V systems of three species (TWS)}. Since $u(x)$, $v(x)$ and $w(x)$ in \eqref{eqn: ansatz for solns} are expressed in terms of polynomials in $\tanh (x)$ and $\frac{d}{dx} \tanh (x)=1-\tanh^2 (x)$, inserting \eqref{eqn: ansatz for solns} into the three equations in \eqref{eqn: L-V systems of three species (TWS)} gives 
\begin{subequations}\label{eqn: alge eqns 10}
\begin{equation}
d_1\,u_{xx}+\theta \,u_x+u\,(\sigma_1-c_{11}\,u-c_{12}\,v-c_{13}\,w)
=\Big[\zeta_{10} + \zeta_{11} \,T(x)+\zeta_{12}  \,{T^2(x)}+ \zeta_{13}  \,{T^3(x)}  \,\Big],
\end{equation}
\begin{equation}
d_2\,v_{xx}+\theta \,v_x+v\,(\sigma_2-c_{21}\,u-c_{22}\,v-c_{23}\,w)
=v\,\Big[\zeta_{20} +\zeta_{21} \,T(x)+\zeta_{22} \,{T^2(x)}  \,\Big],
\end{equation}
\begin{equation}
d_3\,w_{xx}+\theta \,w_x+w\,(\sigma_3-c_{31}\,u-c_{32}\,v-c_{33}\,w)
=w\,\Big[\zeta_{30} +\zeta_{31} \,T(x)+ \zeta_{32} \,{T^2(x)}  \,\Big].
\end{equation}
\end{subequations}
Equating the coefficients of powers of $T(x)$ to zero yields a system of ten algebraic equations:
\begin{equation}\label{eqn: alge eqns 9}
\begin{cases}
\vspace{2mm}
\zeta_{1i}=0, \; i=0,1,2,3,\\
\vspace{2mm}
\zeta_{2i}=0,  \; i=0,1,2,\\
\zeta_{3i}=0, \; i=0,1,2.\\
\end{cases}
\end{equation}
It turns out that \eqref{eqn: alge eqns 10} can be solved to give 

\begin{subequations}\label{eqn: restrictions of parameters of exact soln}
\begin{equation}
c_{11}=\sigma_1,
\end{equation}
\begin{equation}
c_{12}=\frac{d_1 \sigma_1}{k_1 \left(2 d_1+\theta\right)},
\end{equation}
\begin{equation}
c_{13}= \frac{-2 d_1 \theta+d_1 \sigma _1-4 d_1^2}{k_2\left(2 d_1+\theta \right)},
\end{equation}
\begin{equation}
c_{21}=16 d_2+4 \theta
   +\sigma _2,
\end{equation}
\begin{equation}
c_{22}= \frac{2 d_1 \theta -4
   d_2 \theta +d_1 \sigma _2+8
   d_1 d_2-\theta ^2}{k_1
   \left(2 d_1+\theta \right)},
\end{equation}
\begin{equation}
c_{23}=\frac{2 d_1 \theta -10
   d_2 \theta +d_1 \sigma _2-4
   d_1 d_2-\theta ^2}{k_2
   \left(2 d_1+\theta \right)},
\end{equation}
\begin{equation}
c_{31}=4 d_3+2 \theta
   +\sigma _3,
\end{equation}
\begin{equation}
c_{32}=\frac{d_1 \sigma _3+4
   d_1 d_3-\theta ^2}{k_1
   \left(2 d_1+\theta \right)},
\end{equation}
\begin{equation}
c_{33}=\frac{-6 d_3 \theta
   +d_1 \sigma_3-8 d_1
   d_3-\theta ^2}{k_2 \left(2
   d_1+\theta \right)}.
\end{equation}
\end{subequations}
The result obtained is summarized in the following theorem.
\vspace{5mm}

\begin{thm}[\textbf{Exact $(1,0,0)$-$(u^{\ast},v^{\ast},0)$ waves}]\label{thm: exact (1,0,0)-(u,v,0) wave}
The boundary value problem \eqref{eqn: L-V systems of three species (TWS)} with $\sigma_1=c_{11}$ admits an exact solution of the form \eqref{eqn: ansatz for solns} provided that \eqref{eqn: restrictions of parameters of exact soln} holds.
\end{thm}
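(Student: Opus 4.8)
The plan is to establish the theorem by direct verification, reversing the derivation that produced \eqref{eqn: restrictions of parameters of exact soln}. First I would confirm that the ans\"atz \eqref{eqn: ansatz for solns} meets the boundary conditions in \eqref{eqn: L-V systems of three species (TWS)}: since $T(x)=\tanh(x)\to\pm1$ as $x\to\pm\infty$, one reads off $(u,v,w)(-\infty)=(1,0,0)$ and, using $k_1=v^\ast/4$, $(u,v,w)(+\infty)=(u^\ast,v^\ast,0)$, so only the three differential equations remain to be checked. Note also that $(u^\ast,v^\ast,0)$ must be an equilibrium of the reaction part, i.e. $(u^\ast,v^\ast)$ is the coexistence state $\mathbf{e}_4$; this self-consistency will reappear below.

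The computational engine is the Riccati identity $T'=1-T^2$, which guarantees that every derivative of a polynomial in $T$ is again a polynomial in $T$. Concretely I would record $u_x=\tfrac12(u^\ast-1)(1-T^2)$ and $u_{xx}=-(u^\ast-1)\,T\,(1-T^2)$, together with the logarithmic derivatives $v_x=2(1-T)\,v$, $v_{xx}=\big[4(1-T)^2-2(1-T^2)\big]\,v$ and $w_x=-2T\,w$, $w_{xx}=-2(1-3T^2)\,w$. Substituting \eqref{eqn: ansatz for solns} into the three equations of \eqref{eqn: L-V systems of three species (TWS)}, and dividing the second and third equations by the common factors $v$ and $w$ (the first needs no such factor, as $u$ does not vanish), yields exactly the polynomial expressions in \eqref{eqn: alge eqns 10}, of degrees $3$, $2$, and $2$ in $T$. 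Since a polynomial that vanishes on all of $\mathbb{R}$ (equivalently, for a continuum of values $T\in(-1,1)$) must have all coefficients equal to zero, the three equations hold for every $x$ if and only if the ten coefficients $\zeta_{1i}$ $(i=0,\dots,3)$, $\zeta_{2i}$, $\zeta_{3i}$ $(i=0,1,2)$ vanish, i.e. the algebraic system \eqref{eqn: alge eqns 9}.

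It then remains to solve \eqref{eqn: alge eqns 9}. The key structural observation is that the unknown competition coefficients decouple block by block: the three relations $\zeta_{3i}=0$ are affine in $(c_{31},c_{32},c_{33})$ and solve to give the stated expressions, and likewise $\zeta_{2i}=0$ are affine in $(c_{21},c_{22},c_{23})$. For the first equation I would invoke the standing hypothesis $\sigma_1=c_{11}$; with $c_{11}=\sigma_1$ the four relations $\zeta_{1i}=0$ determine $c_{12}$ and $c_{13}$, the $T^3$-coefficient fixing the combination $c_{12}k_1-c_{13}k_2=2d_1$ and the lower-order coefficients pinning them down individually (here one needs $2d_1+\theta\neq0$, which is implicit in the formulas). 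I expect the main obstacle to lie precisely in this first block: there are four coefficient equations but effectively two free unknowns $c_{12},c_{13}$, so the system is consistent only because $\sigma_1=c_{11}$ together with the requirement that $(u^\ast,v^\ast)=\mathbf{e}_4$ forces the two surplus, $u^\ast$-dependent relations to be compatible (indeed they tie $u^\ast$ to the parameters through $\sigma_1(1-u^\ast)=4c_{12}k_1$). Verifying that these $u^\ast$-dependent terms cancel, so that the resulting $c_{12},c_{13}$ are exactly the $u^\ast$-independent expressions in \eqref{eqn: restrictions of parameters of exact soln}, is the delicate bookkeeping step; once it is dispatched, collecting all nine formulas completes the proof.
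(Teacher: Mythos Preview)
Your proposal is correct and follows essentially the same route as the paper: the paper's argument is precisely the derivation preceding the theorem, namely verifying the boundary conditions, substituting the tanh ans\"atz, using $T'=1-T^2$ to reduce each equation to a polynomial in $T$, and then equating coefficients to obtain and solve the algebraic system \eqref{eqn: alge eqns 9}. Your discussion of the overdetermined first block and the role of $\sigma_1=c_{11}$ in ensuring consistency is more explicit than what the paper records, but it is exactly the bookkeeping implicit in the paper's claim that \eqref{eqn: alge eqns 10} ``can be solved to give'' \eqref{eqn: restrictions of parameters of exact soln}.
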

\vspace{5mm}
Theorem~\ref{thm: exact (1,0,0)-(u,v,0) wave} asserts that under certain conditions imposed on the parameters, i.e. under \eqref{eqn: restrictions of parameters of exact soln}, we can find exact $(1,0,0)$-$(u^{\ast},v^{\ast},0)$ waves of \eqref{eqn: L-V systems of three species (TWS)} and the exact waves are polynomials in $\tanh (x)$. To illustrate Theorem~\ref{thm: exact (1,0,0)-(u,v,0) wave}, let us choose $k_1=k_2=d_1=d_2=d_3=1$, $\theta=3$, and $\sigma _1=\sigma_2=\sigma_3=41$ in \eqref{eqn: restrictions of parameters of exact soln}. This gives $c_{11}=41,c_{12}=\frac{41}{5},c_{13}=\frac{31}{5},c_{21}=69,c_{22}=\frac{34}{5},c_{23}=\frac{4}{5},c_{31}=51,c_{32}=\frac{36}{5},c_{33}=\frac{6}{5}$ and $(u^{\ast},v^{\ast})=(\frac{1}{5},4)$. The resulting exact $(1,0,0)$-$(u^{\ast},v^{\ast},0)$ wave is given by
\begin{equation}
\begin{cases}
\vspace{3mm}u(x) = \frac{\displaystyle3}{\displaystyle 5}-\frac{\displaystyle2}{\displaystyle 5}\,\tanh(x), \\
\vspace{3mm}v(x) = (1+ \tanh(x) )^2, \\
w(x) = 1- \tanh^2(x).
\end{cases}
\end{equation}
The profiles of $u(x)$, $v(x)$, and $w(x)$ are shown in Figure~\ref{eqn: figure of uvw}. 
\begin{figure}[ht!]
\center \scalebox{0.8}{\includegraphics{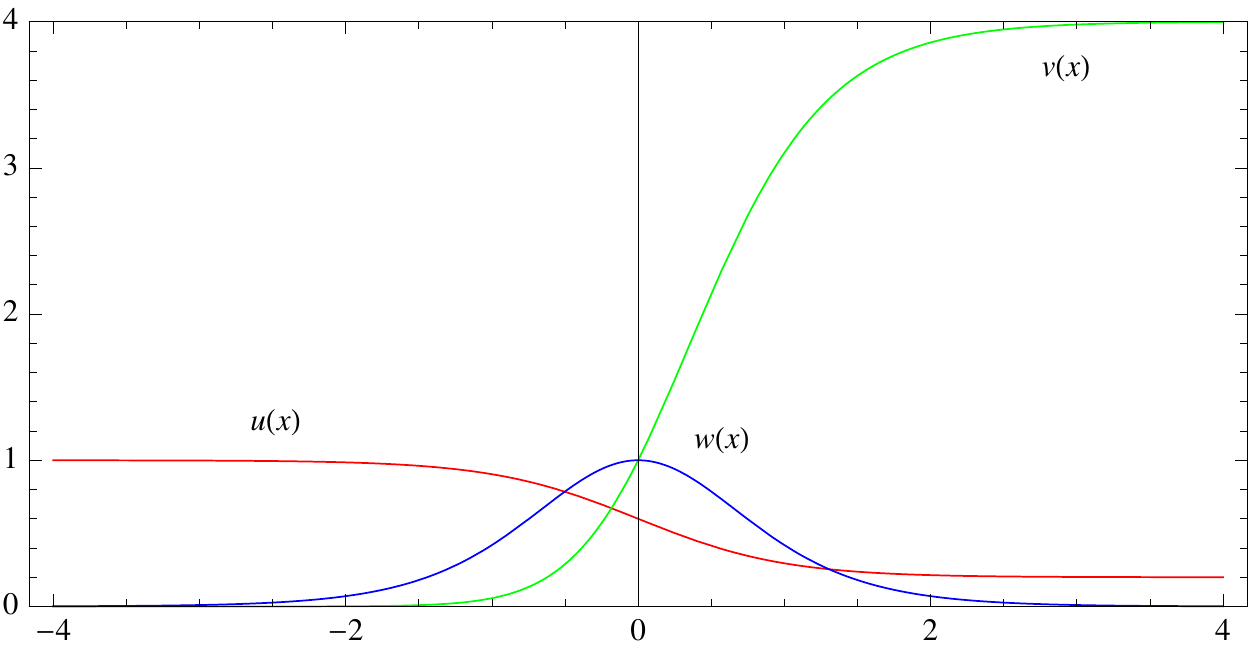}}
\caption{Profiles of the solution $(u(x),v(x),w(x))$.}
\label{eqn: figure of uvw}
\end{figure}
We conclude this section with the remark that the ans\"{a}tz \eqref{eqn: ansatz for solns} for solutions of \eqref{eqn: L-V systems of three species (TWS)} is inspired by the one proposed in \cite{hung2012JJIAM}, where the ans\"{a}tz \eqref{eqn: ansatz for solns} for solutions is
\begin{equation}
\begin{cases}
\vspace{3mm}u(x) = \frac{\displaystyle1}{\displaystyle 2}(u^{\ast}+1)+\frac{\displaystyle1}{\displaystyle2}(u^{\ast}-1)\,T(x), \\
v(x) = k_1\,(1+ T(x) )^2, \\
\end{cases}
\end{equation}
when $w$ is absent in \eqref{eqn: L-V systems of three species (TWS)}.


\vspace{5mm}
\setcounter{equation}{0}
\setcounter{figure}{0}
\section{Applications of the N-barrier maximum principle
}\label{sec: Applications of N-barrier maxi principle }
\vspace{5mm}

\subsection{Application to the existence of three species traveling waves: proof of Theorem~\ref{thm: Existence 3 species}
}\label{subsec: Application to the existence}
\vspace{5mm}


To prove Theorem~\ref{thm: Existence 3 species}, we first observe that the third equation in \eqref{eqn: L-V systems of three species (TWS) c13=c23=0} can be regarded as a non-autonomous Fisher equation when $u=u(x)$ and $v=v(x)$ are given, say $u=\tilde{u}(x)$ and $v=\tilde{v}(x)$. Then the non-autonomous Fisher equation (\cite{Berestycki09Species-shifting-climate}, \cite{Sanchez00Nonautonomous-Fisher}, \cite{Volpert99Non-autonomous-KPP}) is
\begin{equation}\label{eqn: eqn of Fisher type}
d_3\,w_{xx}+\theta \,w_x+w\,(\sigma_3-c_{31}\,\tilde{u}-c_{32}\,\tilde{v}-c_{33}\,w)=0, \quad x\in \mathbb{R}.
\end{equation}
In order to find a solution of \eqref{eqn: eqn of Fisher type}, an approach based on the supersolution-subsolution method is employed. To this end, we introduce supersolutions and subsolutions. $\bar{w}=\bar{w}(x)$ is said to be a \textit{supersolution} of \eqref{eqn: eqn of Fisher type} if it satisfies the differential inequality
\begin{equation}\label{ineq: supsolution of eqn of Fisher type}
d_3\,\bar{w}_{xx}+\theta \,\bar{w}_z+\bar{w}\,(\sigma_3-c_{31}\,\tilde{u}-c_{32}\,\tilde{v}-c_{33}\,\bar{w})\leq 0, \quad x\in \mathbb{R}.
\end{equation}
Similarly, a \textit{subsolution} $\underaccent{\bar}{w}=\underaccent{\bar}{w}(x)$ is defined by reversing the inequality in \eqref{ineq: supsolution of eqn of Fisher type}. The following lemma is helpful in constructing non-trivial solutions of \eqref{eqn: eqn of Fisher type}.
\vspace{5mm}

\begin{lem}[\cite{Berestycki09Species-shifting-climate}]\label{lem: lemma from Berestycki et al}
Suppose that $w(x)$ is a bounded solution of 
\begin{equation}
d_3\,w_{xx}+\theta \,w_x+w\,\varphi(w,x)=0, \quad x\in \mathbb{R},
\end{equation}
where $\varphi(0,x)\rightarrow -{\varphi_0}^{\pm}$ as $x\rightarrow\pm\infty$ for some constants ${\varphi_0}^{+},{\varphi_0}^{-}>0$. Then $w(x)\rightarrow0$ as $x\rightarrow\pm\infty$.
\end{lem}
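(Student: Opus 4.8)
The plan is to prove decay at $+\infty$; the argument at $-\infty$ is identical after the reflection $x\mapsto -x$. The essential difficulty is that the hypothesis controls $\varphi$ only near the state $w=0$, and only through the sign of its limit, so the usual device of translating $w(\cdot+x_n)$ and passing to an autonomous limit equation is unavailable: the limit of $\varphi(w,x)$ for $w$ bounded away from $0$ is simply not provided by the hypotheses. Instead I would discard all information about $\varphi$ except its sign near $0$, reduce the problem to a \emph{linear} differential inequality on a half-line, and close it with an elementary comparison argument.

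Concretely, since $\varphi(0,x)\to-\varphi_0^{+}<0$, I first fix $\delta\in(0,\varphi_0^{+})$ and $X>0$ with $\varphi(0,x)\le-\delta$ for all $x\ge X$. In the setting in which the lemma is applied one has $w\ge0$ and $\varphi(\cdot,x)$ nonincreasing in its first argument (here $\varphi(w,x)=\sigma_3-c_{31}\tilde u-c_{32}\tilde v-c_{33}w$), so $\varphi(w(x),x)\le\varphi(0,x)\le-\delta$ for $x\ge X$. Substituting into \eqref{eqn: eqn of Fisher type} and using $w\ge0$ gives
\[
d_3\,w_{xx}+\theta\,w_x-\delta\,w=-\,w\big(\varphi(w,x)+\delta\big)\ge0,\qquad x\ge X,
\]
so $w$ is a bounded, nonnegative subsolution on $[X,\infty)$ of the constant-coefficient operator $Lz:=d_3z''+\theta z'-\delta z$, whose zeroth-order coefficient $-\delta$ is strictly negative.

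I would then compare $w$ against an explicit exponential barrier. The characteristic equation $d_3\mu^2+\theta\mu-\delta=0$ has roots of opposite sign, $\mu_-<0<\mu_+$, since their product $-\delta/d_3$ is negative independently of the sign of $\theta$. For any $\eta>0$ set $\bar w(x)=\eta+\big(\sup_{\mathbb{R}}w\big)e^{\mu_-(x-X)}$, which satisfies $L\bar w=-\delta\eta\le0$ and $\bar w(X)\ge w(X)$. The main technical point is the maximum principle for $L$ on the unbounded interval $[X,\infty)$: for $z:=\bar w-w$ we have $Lz\le0$, an interior negative minimum $x_0$ is impossible because $Lz(x_0)\ge-\delta z(x_0)>0$ there, and the competing possibility that the infimum of $z$ escapes to $+\infty$ is removed by a Phragm\'{e}n--Lindel\"{o}f correction: replacing $z$ by $z+\varepsilon e^{\mu_+x}$ keeps it a supersolution but sends it to $+\infty$, forcing the infimum to a finite point, after which $\varepsilon\downarrow0$. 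Hence $w\le\bar w$ on $[X,\infty)$, so $\limsup_{x\to\infty}w\le\eta$ for every $\eta>0$, whence $w(x)\to0$.

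The symmetric argument at $-\infty$ uses the positive root $\mu_+$ (so that $e^{\mu_+x}\to0$ as $x\to-\infty$) together with $\varphi(0,x)\le-\delta'$ for $x\le-X'$. I expect the comparison step on the half-line, and in particular the Phragm\'{e}n--Lindel\"{o}f correction ruling out the escape of the extremum to infinity, to be the only genuine obstacle; the reduction to the linear inequality is immediate. It is precisely the nonnegativity of $w$ and the monotonicity of $\varphi$ in $w$ — both available for the Fisher equation \eqref{eqn: eqn of Fisher type} — that the argument relies on to pass from the sign of $\varphi(0,x)$ to the pointwise bound $\varphi(w,x)\le-\delta$.
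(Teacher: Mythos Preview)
The paper does not supply its own proof of this lemma: it is quoted verbatim from Berestycki--Diekmann--Nagelkerke--Zegeling \cite{Berestycki09Species-shifting-climate} and used as a black box in the proof of Theorem~\ref{thm: Existence 3 species}. So there is no in-paper argument to compare against.

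Your argument is sound \emph{for the application at hand}, and you are right to flag that the lemma as stated is too weak to be true: the hypothesis only controls $\varphi(0,x)$, and with no information on $\varphi(w,x)$ for $w$ away from $0$ one cannot rule out, say, $\varphi(w,x)=-\varphi_0+w$, for which $w\equiv\varphi_0$ is a bounded solution that does not decay. Your fix---importing from \eqref{eqn: eqn of Fisher type} the nonnegativity of $w$ and the monotonicity of $\varphi$ in its first argument---is exactly what is needed, and once $\varphi(w(x),x)\le-\delta$ on a half-line the reduction to $d_3w''+\theta w'-\delta w\ge0$ and the exponential barrier/Phragm\'en--Lindel\"of comparison are clean and correct. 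The characteristic roots having opposite signs regardless of the sign of $\theta$ is the point that makes the same scheme work symmetrically at $-\infty$.

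One small refinement: in the Berestycki et al.\ source the result is stated for nonnegative solutions and with enough regularity of $\varphi$ (continuity near $w=0$ uniformly in $x$ on half-lines) that $\varphi(w,x)\le-\delta$ follows already once $w$ is shown to be small; they do not need global monotonicity in $w$. You could match that level of generality by first proving $\limsup w<\infty$ is small via an initial crude barrier, but for the Fisher nonlinearity in \eqref{eqn: eqn of Fisher type} your monotonicity shortcut is simpler and entirely adequate.
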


\vspace{5mm}

To construct a pair of subsolution and supersolution of \eqref{eqn: eqn of Fisher type}, we employ the \textit{tanh method}.

\vspace{5mm}

\begin{proof}[Proof of Theorem~\ref{thm: Existence 3 species}]
Due to $\mathbf{[H4]}$, we clearly have $\underaccent\bar{w}(x)\leq \bar{w}(x)$ for $x\in\mathbb{R}$. First of all, we show that $\underaccent\bar{w}(x)$ and $\bar{w}(x)$ are a subsolution and a supersolution of \eqref{eqn: eqn of Fisher type} respectively. Indeed, a straightforward computation gives us
\begin{align}\label{eqn: subsoln plug into gen. Fisher= 2nd. poly. of tanhz}
&d_3\,\underaccent\bar{w}_{xx}+\theta \,\underaccent\bar{w}_z+\underaccent\bar{w}\,(\,\sigma_3-c_{31}\,\tilde{u}-c_{32}\,\tilde{v}-c_{33}\,\underaccent\bar{w})\notag\\[3mm]
&=\underaccent\bar{w}\,
\big[
-c_{33}\, \underaccent\bar{K}-2\,d_3-\bar{q}+\sigma _3
-2\,\theta\,\tanh(x)+(c_{33}\,\underaccent\bar{K}+6\,d_3) \,\tanh^2(x)  
\big]\geq 0,   \notag\\[3mm]
&d_3\,\bar{w}_{xx}+\theta \,\bar{w}_z+\bar{w}\,(\,\sigma_3-c_{31}\,\tilde{u}-c_{32}\,\tilde{v}-c_{33}\,\bar{w})\notag\\[3mm]
&=\bar{w}\,
\big(
-c_{33}\, \bar{K}-\underaccent\bar{q}+\sigma _3 
\big)\leq 0
\end{align}
under hypotheses $\mathbf{[H2]}$ and $\mathbf{[H3]}$. By means of Theorem~\ref{thm: lb<q(x)<ub}, we have used in the last two inequalities an estimate of $c_{31}\,\tilde{u}(x)+c_{32}\,\tilde{v}(x)$, i.e.
\begin{equation}
\underaccent\bar{q}\leq c_{31}\,\tilde{u}(x)+c_{32}\,\tilde{v}(x)\leq \bar{q},\quad x\in\mathbb{R}.
\end{equation}
The existence of a solution $w=w(x)$ for \eqref{eqn: eqn of Fisher type} lying between the subsolution $\underaccent\bar{w}(x)$ and the supersolution $\bar{w}(x)$ constructed above follows from Theorem 2.8 in \cite{KochMedina95longtimebehv}. In view of $\mathbf{[H1]}$, we finally employ Lemma~\ref{lem: lemma from Berestycki et al} to conclude that the solution $w(x)$ of \eqref{eqn: eqn of Fisher type} has the asymptotic behavior $\lim_{|x|\rightarrow \infty} w(x)=0$. This completes the proof.
\end{proof}

\subsection{Application to the nonexistence of three species traveling waves: proof of Theorem~\ref{thm: Nonexistence 3 species} }
\label{subsec: nonexistence}
\vspace{5mm}

\begin{proof}[Proof of Theorem~\ref{thm: Nonexistence 3 species}]
We prove Theorem~\ref{thm: Nonexistence 3 species} by contradiction.
Suppose that, to the contrary, there exist $u(x),v(x),w(x)>0$, $x\in\mathbb{R}$ satisfying \eqref{eqn: L-V systems of three species (TWS)}.  Since $w(x)>0$ for $x\in\mathbb{R}$ and $w(\pm\infty)=0$, there exists $x_0\in\mathbb{R}$ such that $\max_{x\in\mathbb{R}} w(x)=w(x_0)>0$, $w''(x_0)<0$, and $w'(x_0)=0$. Due to $d_3\,w_{xx}+\theta \,w_x+w(\sigma_3-c_{31}\,u-c_{32}\,v-c_{33}\,w)=0$, we obtain
\begin{equation}\label{eqn: w(???) ???>0}
\sigma_3-c_{31}\,u(x_0)-c_{32}\,v(x_0)-c_{33}\,w(x_0)>0,
\end{equation}
and hence
\begin{equation}
w(x)\leq w(x_0)< \frac{1}{c_{33}}\big(\sigma_3-c_{31}\,u(x_0)-c_{32}\,v(x_0)\big)<\frac{\sigma_3}{c_{33}},\;x\in\mathbb{R}.
\end{equation}
As a result, we have
\begin{equation}\label{eqn: nonexistence diff ineq <0}
\begin{cases}
\vspace{3mm}
d_1\,u_{xx}+\theta \,u_x+u(\sigma_1-c_{13}\,\sigma_3\,c_{33}^{-1}-c_{11}\,u-c_{12}\,v)\leq0, \quad x\in\mathbb{R}, \\
d_2\,v_{xx}\hspace{0.8mm}+\theta \,v_x+v(\sigma_2-c_{23}\,\sigma_3\,c_{33}^{-1}-c_{21}\,u-c_{22}\,v)\leq0, \quad x\in\mathbb{R}.
\end{cases}
\end{equation}
Because of $\mathbf{[A1]}$ and $\mathbf{[A2]}$, we can apply $(I)$ of Theorem~\ref{thm: lb<q(x)<ub} to \eqref{eqn: nonexistence diff ineq <0}. Indeed, $\mathbf{[A1]}$ assures the positivity of $\sigma_1-c_{13}\,\sigma_3\,c_{33}^{-1}$ and $\sigma_2-c_{23}\,\sigma_3\,c_{33}^{-1}$, whereas the assumption of strong competition $\mathbf{[S]}$ or the assumption of weak competition $\mathbf{[W]}$ for the nonlinearity in \eqref{eqn: nonexistence diff ineq <0} follows from $\mathbf{[A2]}$. Consequently, $(I)$ of Theorem~\ref{thm: lb<q(x)<ub} gives us a lower bound of $c_{31}\,u(x)+c_{32}\,v(x)$, i.e. for $x\in\mathbb{R}$,
\begin{equation}
c_{31}\,u(x)+c_{32}\,v(x)\geq c_{33}^{-1}\,\min\bigg[c_{31}\,d_1\min\Big[\frac{\displaystyle\Sigma_1}{\displaystyle c_{11}},\frac{\displaystyle\Sigma_2}{\displaystyle c_{21}}\Big],c_{32}\,d_2\min\Big[\frac{\displaystyle\Sigma_2}{\displaystyle c_{22}},\frac{\displaystyle\Sigma_1}{\displaystyle c_{12}}\Big]\bigg]\min\Big[\frac{\displaystyle d_1}{\displaystyle d_2},\frac{\displaystyle d_2}{\displaystyle d_1}\Big].
\end{equation}
The condition $\mathbf{[A3]}$ then yields
\begin{equation}
c_{31}\,u(x)+c_{32}\,v(x)\geq\sigma_3,\;x\in\mathbb{R},
\end{equation}
which contradicts \eqref{eqn: w(???) ???>0}. This completes the proof of the theorem.

\end{proof}

\vspace{5mm}

\textbf{Acknowledgments.} The author would like to express gratitude to Dr. Tom Mollee for his careful reading of the manuscript and valuable comments to improve the readability of the paper. The author is also grateful for inspiring discussions with and constructive suggestions from Prof. Chiun-Chuan Chen.







\vspace{5mm}











\end{document}